\newtheorem{theor}{Theorem}
\newtheorem{lemma}[theor]{Lemma}
\newenvironment{proof}{\noindent{\scshape Proof.}}{\hspace{2mm} $\square$}
\newcommand{\Z}{\mathbb{Z}}
\newcommand{\R}{\mathbb{R}}
\newcommand{\ep}{\epsilon}
\DeclareMathOperator{\sgn}{sgn \,}
\DeclareMathOperator{\poisson}{Poisson \,}
\DeclareMathOperator{\card}{card \,}
\begin{document}

\begin{frontmatter}

\title     {The naming game in language \\ dynamics revisited}
\runtitle  {The naming game in language dynamics revisited}
\author    {N. Lanchier\thanks{Research supported in part by NSF Grant DMS-10-05282.}}
\runauthor {N. Lanchier}
\address   {School of Mathematical and Statistical Sciences, \\ Arizona State University, \\ Tempe, AZ 85287, USA.}

\begin{abstract} \ \
 This article studies a biased version of the naming game in which players located on a connected graph interact through successive
 conversations to bootstrap a common name for a given object.
 Initially, all the players use the same word $B$ except for one bilingual individual who also uses word $A$.
 Both words are attributed a fitness, which measures how often players speak depending on the words they use and how often each word
 is pronounced by bilingual individuals.
 The limiting behavior depends on a single parameter: $\phi$ = the ratio of the fitness of word $A$ to the fitness of word $B$.
 The main objective is to determine whether word $A$ can invade the system and become the new linguistic convention.
 In the mean-field approximation, invasion of word $A$ is successful if and only if $\phi > 3$, a result that we also prove for the
 process on complete graphs relying on the optimal stopping theorem for supermartingales and random walk estimates.
 In contrast, for the process on the one-dimensional lattice, word $A$ can invade the system whenever $\phi > 1.053$ indicating that
 the probability of invasion and the critical value for $\phi$ strongly depend on the degree of the graph.
 The system on regular lattices in higher dimensions is also studied by comparing the process with percolation models.
\end{abstract}

\begin{keyword}[class=AMS]
\kwd[Primary ]{60K35}
\end{keyword}

\begin{keyword}
\kwd{Interacting particle system, naming game, language dynamics, semiotic dynamics.}
\end{keyword}

\end{frontmatter}

%%%%%%%%%%%%%%%%%%%%%%%%%%%%%%%%%%%%%%%%%%%%%%%%%%%%%%%%%%%%%%%%%%%%%%%%%%%%%%%%%%%%%%%%%%%%%%%%%%%%%%%%%%%%%%%%%%%%%%%%%%%%%%%%%%%%%%%%%%

\section{Introduction}
\label{sec:intro}

\indent The naming game was first proposed by Stells \cite{steels_1995} to describe the emergence of conventions and shared lexicons
 in a population of individuals interacting through successive conversations, but a number of variants of the model have also been
 introduced and studied numerically by statistical physicists, and we refer to Section V.B of \cite{castellano_2009} for a review of
 these different variants.
 The reason for the popularity of the naming game in the physics literature is that it is similar mathematically to traditional models
 in the field of statistical mechanics.
 The model studied in this paper is a biased version of the spatial naming game considered by Baronchelli et al. \cite{baronchelli_2006}.
 Their system consists of a population of individuals located on the vertex set of a finite connected graph that has to be thought of
 as an interaction network.
 Each individual is characterized by an internal inventory of words that are synonyms describing the same object.
 All inventories are initially empty and evolve through successive conversations:
 at each time step, an edge of the network is chosen uniformly at random, which causes the two individuals connected by the edge
 to interact.
 One individual is chosen at random to be the ``speaker'' making the other individual the ``hearer''.
 If the speaker does not have any word to describe the object then she invents one, whereas if she already has some words then she
 chooses one at random to be passed to the hearer.
 The conversation results in the following alternative:
 if the hearer already has the word pronounced in her internal inventory then this word is selected as the norm by both individuals --
 all the other words are removed from both inventories -- otherwise the hearer adds the word pronounced to her inventory.

\indent Based on numerical simulations, Baronchelli et al. \cite{baronchelli_2006} studied the maximum number of words present in the
 system as well as the time to global consensus, i.e., the time until all inventories consist of the same single word.
 In contrast, we use the naming game to study whether a new word can spread into a population that is already using another word as a
 convention, i.e., we assume that initially all the inventories reduce to the same single word, say word $B$, except for one individual
 who also has another word in her inventory, say word $A$.
 Under the symmetric rules of the naming game, the probability that $A$ becomes eventually the new convention tends to zero as the
 population size goes up to infinity so we look at biased versions of the naming game in which each word is attributed a fitness.
 In our model, the fitness of each word measures the fitness of each individual, that is how likely they are selected as a speaker
 rather than hearer, and also how likely each word is selected to be pronounced by bilingual individuals, i.e., individuals who
 possess both words in their internal inventory.
 Another significant difference between this article and previous works about the naming game is that it provides a rigorous analysis
 of the model on both finite and infinite graphs rather than results based on numerical simulations which are unavoidably restricted
 to finite graphs.
 Also, we describe the dynamics in continuous time rather than discrete time, i.e, we assume that conversations occur at rate one
 along each edge of the graph, in order to have a model well defined on finite and infinite graphs.

\indent To describe our biased version of the naming game more formally, we let $\phi_A$ and $\phi_B$ denote the fitness of word
 $A$ and word $B$, respectively, and set
 $$ \phi_{AB} \ := \ (1/2) \,(\phi_A + \phi_B) \quad \hbox{and} \quad p_{X \to Y} \ := \ \phi_X \,(\phi_X + \phi_Y)^{-1} $$
 for all $X, Y \in \{A, B, AB \}$.
 Note in particular that
 $$ p_{X \to X} \ = \ 1/2 \quad \hbox{and} \quad p_{X \to Y} + p_{Y \to X} \ = \ 1. $$
 The average fitness $\phi_{AB}$ represents the fitness of bilingual individuals.
 In each interaction, the individual playing the role of the speaker is chosen at random with probability her fitness divided by the
 overall fitness of the pair: when the neighbors are in state $X$ and $Y$, the individual chosen to be the speaker is the individual
 in state $X$ with probability $p_{X \to Y}$.
 Similarly, given that a bilingual individual is chosen as the speaker, the conditional probability that word $A$ is pronounced
 is equal to the relative fitness $p_{A \to B}$.
 In particular, each edge becomes active at rate one, which results in the following possible transitions for the states at the
 vertices connected by the edge:
\begin{equation}
\label{eq:naming-game}
 \begin{array}{rclcl}
   (A, B)   & \to & (A, AB)  & \hbox{with probability} & p_{A \to B} \vspace*{0pt} \\
            & \to & (AB, B)  & \hbox{with probability} & p_{B \to A} \vspace*{6pt} \\
   (A, AB)  & \to & (A, A)   & \hbox{with probability} & p_{A \to AB} + p_{AB \to A} \ p_{A \to B} \vspace*{0pt} \\
            & \to & (AB, AB) & \hbox{with probability} & p_{AB \to A} \ p_{B \to A} \vspace*{6pt} \\
   (B, AB)  & \to & (B, B)   & \hbox{with probability} & p_{B \to AB} + p_{AB \to B} \ p_{B \to A} \vspace*{0pt} \\
            & \to & (AB, AB) & \hbox{with probability} & p_{AB \to B} \ p_{A \to B} \vspace*{6pt} \\
   (AB, AB) & \to & (A, A)   & \hbox{with probability} & p_{A \to B} \vspace*{0pt} \\
            & \to & (B, B)   & \hbox{with probability} & p_{B \to A} \end{array}
\end{equation}
 Note that, when the fitnesses are equal, one recovers the transition probabilities of the unbiased naming game described above.
 We formulate the dynamics using two parameters to have natural notations that preserve the symmetry between both words, but we
 point out that the long-term behavior of the process only depends on the ratio $\phi := \phi_A / \phi_B$. \vspace*{8pt}

\newpage

%%%%%%%%%%%%%%%%%%%%%%%%%%%%%%%%%%%%%%%%%%%%%%%%%%%%%%%%%%%%%%%%%%%%%%%%%%%%%%%%%%%%%%%%%%%%%%%%%%%%%%%%%%%%%%%%%%%%%%%%%%%%%%%%%%%%%%%%%%

\noindent {\bf Mean-field model.}
\begin{figure}[t!]
\centering
 \includegraphics[width = 440pt]{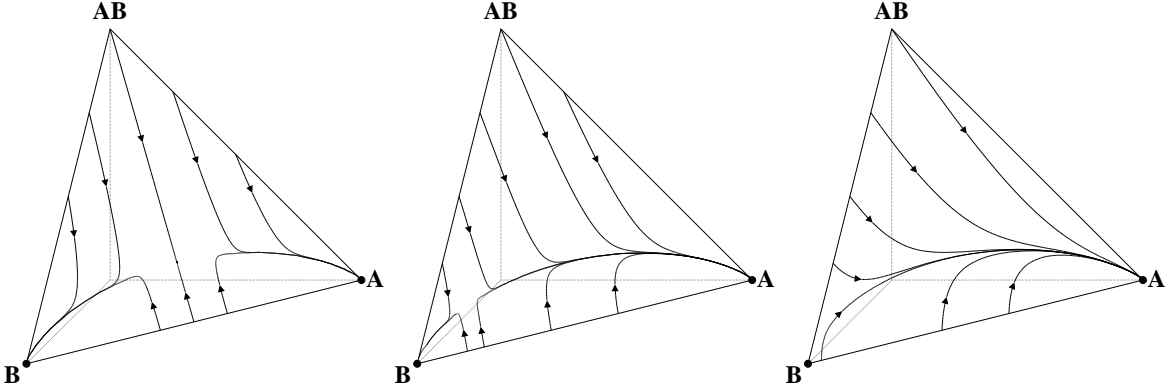}
\caption{\upshape Solution curves of the mean-field model with $\phi = 1$, $\phi = 1.5$ and $\phi = 4$, respectively.}
\label{fig:mean-field}
\end{figure}
 Before stating our results for the spatial stochastic model, we look at its nonspatial deterministic mean-field approximation,
 i.e, the model obtained by assuming that the population is well-mixing.
 This results in the following system of differential equations:
 $$ \begin{array}{rcl}
         u_A' & = & u_A \,u_{AB} \,(1 - 2 \,p_{AB \to A} \ p_{B \to A}) \ - \ u_A \,u_B \ p_{B \to A} \ + \ 2 \,u_{AB}^2 \ p_{A \to B} \vspace*{4pt} \\
         u_B' & = & u_B \,u_{AB} \,(1 - 2 \,p_{AB \to B} \ p_{A \to B}) \ - \ u_A \,u_B \ p_{A \to B} \ + \ 2 \,u_{AB}^2 \ p_{B \to A} \vspace*{4pt} \\
      u_{AB}' & = & - (u_A + u_B)' \end{array} $$
 where $u_X$ denotes the frequency of type $X$ individuals for $X \in \{A, B, AB \}$.
 The mean-field model has two trivial equilibria, namely
 $$ e_A \ := \ (1, 0, 0) \qquad \hbox{and} \qquad e_B \ := \ (0, 1, 0) $$
 which correspond to the configuration in which all individuals are of type $A$ and the configuration in which all individuals
 are of type $B$, respectively.
 We say that word $A$ can invade word $B$ in the mean-field model whenever the system starting from any initial state different
 from $e_B$ converges to the trivial equilibrium $e_A$.
 Regardless of the ratio $\phi := \phi_A / \phi_B$, the frequency of type $A$ individuals might decrease because the boundary
 $u_{AB} = 0$ is repelling, but looking instead at the difference between the frequency of individuals using word $A$ and
 word $B$ gives
 $$ \begin{array}{rcl}
     (u_A - u_B)' & = & u_A \,u_{AB} \,(1 - 2 \,p_{AB \to A} \ p_{B \to A}) \vspace*{4pt} \\ && \hspace*{10pt} - \
      u_B \,u_{AB} \,(1 - 2 \,p_{AB \to B} \ p_{A \to B}) \ + \ (u_A \,u_B + 2 \,u_{AB}^2)(p_{A \to B} - p_{B \to A}) \vspace*{4pt} \\ & = &
     (3 \phi - 1)(3 \phi + 1)^{-1} \,u_A \,u_{AB} \vspace*{4pt} \\ && \hspace*{10pt} + \
     (\phi - 3)(\phi + 3)^{-1} \,u_B \,u_{AB} \ + \ (\phi - 1)(\phi + 1)^{-1} \,(u_A \,u_B + 2 \,u_{AB}^2), \end{array} $$
 which is positive for all $\phi > 3$ when $u_A \neq 1$ and $u_B \neq 1$.
 This implies that there is no equilibrium other than the two trivial equilibria and that word $A$ can invade word $B$ for all $\phi > 3$.
 This condition is sharp in the sense that $e_B$ is locally stable when $\phi < 3$.
 Indeed, the Jacobian matrix of the system of differential equations at point $e_B$ reduces to
 $$ \mathcal J_{e_B} \ = \ \left(\begin{array}{ccccc}
  - p_{B \to A} &&  0 && 0 \vspace*{5pt} \\
  - p_{A \to B} &&  0 && 1 - 2 q_A \vspace*{5pt} \\
    1           &&  0 && 2 q_A - 1 \end{array} \right) $$
 where $q_A := p_{AB \to B} \ p_{A \to B}$ is a key quantity that will appear again later.
 The eigenspace associated with the eigenvalue zero is generated by the vector $(0, 1, 0)$ which is not oriented in the direction of the
 two-simplex containing the solution curves.
 The other two eigenvalues are
 $$ - p_{B \to A} \ = \ - (\phi + 1)^{-1} \ < \ 0 \qquad \hbox{and} \qquad 2 q_A - 1 \ = \ (\phi - 3)(\phi + 3)^{-1} $$
 which are both negative when $\phi < 3$.
 In particular, for all $\phi < 3$, the equilibrium $e_B$ is locally stable, therefore word $A$ cannot invade.
 Note that the obvious symmetry of the model also implies that both trivial equilibria are locally stable when $1/3 < \phi < 3$.
 Numerical simulations of the mean-field model suggest that, in this case, there is an additional nontrivial fixed point which
 is a saddle point, therefore the system is bistable:
 for almost all initial conditions, the system converges to one of the two trivial equilibria (see Figure \ref{fig:mean-field} for
 pictures of the solution curves). \vspace*{8pt}

%%%%%%%%%%%%%%%%%%%%%%%%%%%%%%%%%%%%%%%%%%%%%%%%%%%%%%%%%%%%%%%%%%%%%%%%%%%%%%%%%%%%%%%%%%%%%%%%%%%%%%%%%%%%%%%%%%%%%%%%%%%%%%%%%%%%%%%%%%

\noindent {\bf Spatial stochastic model.}
 We now look at the spatial stochastic naming game \eqref{eq:naming-game}.
 For the stochastic process, the main objective is to study the probability that word $A$ invades the population and is selected as a
 new linguistic convention when starting with a single bilingual individual and all the other individuals of type $B$.
 Note that, for non-homogeneous graphs, this probability depends on the location of the initial bilingual individual.
 Also, letting $\eta_t (x)$ be the state of the individual at vertex $x$ at time $t$, and letting $P_x$ denote the law of the process
 starting with
 $$ \eta_0 (x) = AB \quad \hbox{and} \quad \eta_0 (y) = B \ \ \hbox{for all} \ y \in V, \ y \neq x $$
 we define the probability of invasion as
\begin{equation}
\label{eq:invasion}
 \begin{array}{l}
  p_A \ := \ \inf_{x \in V} \,P_x \,(\lim_{t \to \infty} \eta_t (y) = A \ \hbox{for all} \ y \in V). \end{array}
\end{equation}
 Interestingly, our results indicate that the probability of invasion strongly depends on the topology of the network of interactions,
 suggesting that, on regular graphs, it is decreasing with respect to the degree of the network, a property that cannot be captured by
 the mean-field model since it excludes any spatial structure.
 To begin with, we look at finite graphs.
 Our first theorem extends the first result found for the mean-field model: word $A$ can invade word $B$ for all $\phi > 3$.
\begin{theor} --
\label{th:finite-graphs}
 Assume that $G$ is finite and $\phi > 3$. Then, $p_A \geq 1 - 3 / \phi > 0$.
\end{theor}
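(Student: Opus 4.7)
My plan is to exhibit, for each $\phi > 3$, an explicit bounded supermartingale of the form $M_t := c^{\psi(\eta_t)}$, where $\psi(\eta) := 2\,n_A(\eta) + n_{AB}(\eta)$ counts the ``$A$-content'' of $\eta$ (a bilingual being worth half an $A$) and $c \in (0,1)$ is a constant chosen below. The two absorbing configurations give $\psi = 0$ (all-$B$) and $\psi = 2|V|$ (all-$A$), while the initial configuration $\eta_0$ with a single bilingual satisfies $\psi(\eta_0) = 1$ and $M_0 = c$. Because $G$ is finite and connected, the only absorbing configurations are the two consensus states; the absorption time $\tau$ is almost surely finite, and the fact that $M_t \in [0,1]$ will let me apply optional stopping at $\tau$.

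The supermartingale check is purely local, because each edge update only alters its two endpoints: it suffices to verify $E[c^{\Delta \psi}] \leq 1$ on each of the four nontrivial edge types, using the per-edge probabilities in~\eqref{eq:naming-game}. Reading these off, $\Delta \psi \in \{\pm 1\}$ on $(A,B)$, $(A,AB)$, and $(B,AB)$ edges, whereas $\Delta \psi \in \{\pm 2\}$ on $(AB,AB)$ edges. After substitution, each condition reduces to a quadratic (or biquadratic) in $c$ that factors cleanly: $(\phi c - 1)(c-1) \leq 0$ for $(A,B)$, $(3\phi c - 1)(c-1) \leq 0$ for $(A,AB)$, $(\phi c - 3)(c-1) \leq 0$ for $(B,AB)$, and $(\phi c^2 - 1)(c^2-1) \leq 0$ for $(AB,AB)$. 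The two binding lower bounds on $c$ are $3/\phi$ (from $(B,AB)$) and $1/\sqrt{\phi}$ (from $(AB,AB)$), and their maximum is strictly less than $1$ exactly when $\phi > 3$.

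Setting $c := \max(3/\phi,\,1/\sqrt{\phi})$, optional stopping gives
\[ c^{2|V|}\, P_x(\eta_\tau \equiv A) \ + \ P_x(\eta_\tau \equiv B) \ \leq \ M_0 \ = \ c, \]
which, combined with $P_x(\eta_\tau \equiv A) + P_x(\eta_\tau \equiv B) = 1$, rearranges to $P_x(\eta_\tau \equiv A) \geq (1-c)/(1-c^{2|V|}) \geq 1-c$. Since $1/\sqrt{\phi} \leq 3/\phi$ precisely when $\phi \leq 9$, one has $c \leq 3/\phi$ in both regimes, and therefore $p_A \geq 1-c \geq 1 - 3/\phi$.

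The main technical obstacle I anticipate is the $(AB,AB)$ edge: both endpoints flip simultaneously, so $|\Delta \psi| = 2$, and this produces the biquadratic constraint $c \geq 1/\sqrt{\phi}$, which is strictly tighter than $3/\phi$ once $\phi > 9$ and forces the two-regime choice of $c$. The linear weights $(2,1)$ in the definition of $\psi$ are motivated precisely by this: they force the three easier edge types to produce unit jumps, and in particular arrange for the $(B,AB)$ constraint to factor with the critical root $c = 3/\phi$, matching the eigenvalue $2q_A - 1 = (\phi-3)/(\phi+3)$ identified in the mean-field Jacobian preceding the theorem and explaining why the threshold value is exactly $\phi = 3$.
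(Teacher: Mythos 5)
Your construction is, up to normalization, identical to the paper's: since $n_A + n_B + n_{AB} = N$, your exponent $\psi = 2n_A + n_{AB}$ equals $N + (A_t - B_t)$, so $c^{\psi(\eta_t)} = c^N\,c^{A_t - B_t}$ is a constant multiple of the supermartingale $M_t = a^{A_t - B_t}$ used in Lemma \ref{lem:supermartingale}, and your four edge-by-edge factorizations $(\phi c - 1)(c-1)$, $(3\phi c - 1)(c-1)$, $(\phi c - 3)(c-1)$, $(\phi c^2 - 1)(c^2 - 1)$ are exactly the quantities \eqref{eq:martingale-3}, \eqref{eq:martingale-1}, \eqref{eq:martingale-2}, \eqref{eq:martingale-4} in the paper. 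The optional stopping step and the bound $(1-c)/(1-c^{2N}) \geq 1-c$ also coincide with Lemma \ref{lem:stopping}. To your credit, you isolate the $(AB,AB)$ constraint $c \geq 1/\sqrt{\phi}$ explicitly, which the paper handles only implicitly by invoking \eqref{eq:martingale-4} for $a = 3/\phi$.

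There is, however, a genuine error in your last step. You set $c := \max(3/\phi,\,1/\sqrt{\phi})$ and then assert that ``one has $c \leq 3/\phi$ in both regimes.'' That is backwards: $1/\sqrt{\phi} \leq 3/\phi$ holds precisely for $\phi \leq 9$, so for $\phi > 9$ the maximum is $c = 1/\sqrt{\phi} > 3/\phi$, and your argument yields only $p_A \geq 1 - 1/\sqrt{\phi}$, which is positive but strictly \emph{weaker} than the claimed $1 - 3/\phi$ in that regime. So as written you prove the theorem only for $3 < \phi \leq 9$. You should be aware that the paper's own proof has the same blind spot --- it takes $a = 3/\phi$ throughout even though \eqref{eq:martingale-4} requires $a \geq 1/\sqrt{\phi}$ for the $(AB,AB)$ term, a condition that fails for $\phi > 9$ (e.g., a configuration consisting entirely of bilinguals then has positive drift for $M_t$) --- so your two-regime choice of $c$ is the honest reading of the computation. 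To recover the full bound $1 - 3/\phi$ for $\phi > 9$ you need an additional idea: either accept the corrected statement $p_A \geq 1 - \max(3/\phi, 1/\sqrt{\phi})$, or decouple the weights on $A$'s and $B$'s by using a two-parameter supermartingale of the form $\alpha^{n_A}\gamma^{n_B}$ (your choice forces $\alpha\gamma = 1$, which is what makes the $(AB,AB)$ edge binding). Either way, the sentence ``$c \leq 3/\phi$ in both regimes'' must go.
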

 Note that on finite graphs $p_A$ is always positive but might vanish to zero as the population size increases.
 In contrast, Theorem \ref{th:finite-graphs} shows more particularly that the probability of invasion is bounded from below by a
 constant that depends on the ratio $\phi$ but not on the number of vertices.
 The idea of the proof is to show first that a certain function of the number of type $A$ individuals and the number of type $B$
 individuals is a supermartingale with respect to the $\sigma$-algebra generated by the process and then apply the optimal stopping
 theorem.
 Our next result indicates that the invadability condition in Theorem \ref{th:finite-graphs} is sharp for complete graphs in the sense
 that the probability of invasion vanishes to zero as the population increases when $\phi < 3$.
\begin{theor} --
\label{th:complete-graphs}
 Assume that $G$ is the complete graph with $N$ vertices. Then,
 $$ \lim_{N \to \infty} \ p_A \ = \ 0 \quad \hbox{for all} \ \ \phi < 3. $$
\end{theor}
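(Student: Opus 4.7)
The plan is to prove that with high probability the chain returns to $e_B$ before any pure-$A$ individual is ever produced, after which absorption at $e_A$ has become impossible. Let $T_A, T_B$ denote the hitting times of the two trivial equilibria and let $\tau = \inf\{t : a_t \geq 1\}$ be the first time a pure-$A$ individual appears. Since $a = N$ at $e_A$, one has $\tau \leq T_A$, so $\{T_A < T_B\} \subset \{\tau < T_B\}$, and it suffices to prove $P_{(0,1)}(\tau < T_B) \to 0$ as $N \to \infty$.

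I would then analyze the process restricted to the face $\{a = 0\}$, on which only $B$-$AB$ and $AB$-$AB$ edges trigger state changes. Reading off \eqref{eq:naming-game}, while $a_t = 0$ the coordinate $c_t$ jumps by $+1$ at rate $c(N-c)\,\phi/(\phi+3)$ from $(B,AB)\to(AB,AB)$, by $-1$ at rate $c(N-c)\cdot 3/(\phi+3)$ from $(B,AB)\to(B,B)$, and by $-2$ at rate $\binom{c}{2}/(\phi+1)$ from $(AB,AB)\to(B,B)$; in addition, the chain is ``killed'' (leaves the face to a state with $a = 2$) at rate $k(c) = \binom{c}{2}\,\phi/(\phi+1)$ via $(AB,AB)\to(A,A)$. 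Crucially, the up/down ratio of the two $B$-$AB$ rates equals $\phi/3$ independently of $c$, so the embedded jump chain of $c_t$, after discarding the $-2$ and killing transitions, is exactly a biased random walk on $\Z_+$ absorbed at $0$ with up probability $p = \phi/(\phi+3)$ and down probability $q = 3/(\phi+3)$. For $\phi < 3$ one has $p < q$, and this walk is absorbed at $0$ almost surely starting from $c = 1$.

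Next I would invoke standard Green's function estimates. With $\rho = q/p = 3/\phi > 1$, the expected number of visits to level $c \geq 1$ before absorption, starting from $c = 1$, equals $V(1,c) = 1/(q\rho^{c-1}) = ((\phi+3)/3)(\phi/3)^{c-1}$, and by a pathwise comparison the same quantity upper-bounds the corresponding count for the full chain (the $-2$ jumps and the killings only accelerate absorption or removal). Since the total outgoing rate at $(0,c)$ is of order $cN$ for all $1 \leq c \leq N$, the expected time spent at $(0,c)$ is $O(V(1,c)/(cN))$; multiplying by $k(c) = O(c^2)$ and summing the convergent geometric series $\sum_{c \geq 2}(c-1)(\phi/3)^{c-1} = 3\phi/(3-\phi)^2$ yields
\begin{equation*}
 E\Bigl[\int_0^{\tau \wedge T_B} k(c_s)\,ds\Bigr] \;\leq\; \frac{C(\phi)}{N}.
\end{equation*}
Since $\tau$ is the first jump time of an inhomogeneous Poisson process with intensity $k(c_t)$, the elementary inequality $1 - e^{-x} \leq x$ gives $P(\tau < T_B) \leq C(\phi)/N$, and combined with the first paragraph this yields $p_A = O(1/N) \to 0$.

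The main difficulty is to make the biased-walk comparison genuinely uniform in $c$, rather than merely a leading-order heuristic valid for $c \ll N$. This is handled by noting that the up/down ratio of the $B$-$AB$ transitions is the exact constant $\phi/3$ at every $c$, and that the additional transitions omitted in the comparison (the $-2$ jumps and the killings themselves) only decrease the expected occupation times compared to the pure biased walk; the geometric decay of $V(1,c)$ in $c$ then ensures that contributions from levels of order $N$ are uniformly negligible, so no separate boundary analysis is required.
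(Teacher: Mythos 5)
Your proposal is correct and rests on the same core observation as the paper's proof: started from one bilingual individual, the process stays on the face $a = 0$ until an $AB$--$AB$ interaction occurs, on that face the bilingual count is a subcritical birth-and-death chain with up/down probabilities $q_A = \phi\,(\phi+3)^{-1} < 1/2 < 1 - q_A$, and a pure $A$ can only be created by an $AB$--$AB$ interaction whose per-jump probability is $O(c/N)$, so the chance that one ever occurs before absorption at $e_B$ vanishes. The executions differ in a way worth noting. The paper stops the process at the first $AB$--$AB$ interaction of \emph{any} kind (a ``collision''), so that up to that time the bilingual count is \emph{exactly} the pure birth-and-death chain; it then truncates on the number of jumps $J$ before extinction (via a Stirling estimate on $P(J \geq 2n_\ep+1)$) and applies a union bound over the remaining $O(n_\ep)$ jumps, each carrying collision probability at most $(n+1)/N$, obtaining $p_A < 2\ep$. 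You instead keep the $(AB,AB)\to(B,B)$ transitions in the dynamics, kill only at $(AB,AB)\to(A,A)$, and integrate the killing hazard against the occupation measure via the Green's function $V(1,c) = 1/(q\rho^{c-1})$, which yields the sharper rate $p_A = O(1/N)$. The one step you should tighten is the assertion that $V(1,c)$ for the pure walk upper-bounds the visit counts of the chain \emph{with} the $-2$ jumps and killing ``by a pathwise comparison'': this is not literally a pathwise statement, since under the natural coupling (run the walk on the $\pm 1$ increments only) a $-2$ jump from $c+1$ to $c-1$ lets the perturbed chain revisit level $c$ while the dominating walk stays strictly above it, so visit counts are not monotone along paths. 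The inequality is nevertheless true and should be proved through the decomposition $V(1,c) = P_1(\hbox{hit } c)/P_c(\hbox{no return to } c)$: the extra transitions decrease the first factor (up-moves are always $+1$, so the dominating walk must pass through $c$ before the perturbed chain can reach it) and decrease the return probability in the second (using $\theta(c) \leq 1$ for the upward excursion and $(\phi+1)^{-1} < 3\,(\phi+3)^{-1} = 1 - q_A$ for the downward one). Alternatively, you can sidestep this entirely by adopting the paper's device of stopping at the first $AB$--$AB$ interaction of any kind, at the cost of only a constant in your $O(1/N)$ bound.
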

 In the proof of Theorem \ref{th:finite-graphs}, the dynamics of the number of type $A$ and type $B$ is expressed as a function of the
 number of edges of different types.
 The complete graph is the only graph for which the number of edges of different types can be expressed as a function of the number of
 individuals of different types.
 Also, one of the keys to proving the theorem is to use the fact that, on complete graphs, the number of individuals in different states
 becomes a Markov chain.
 The combination of both theorems indicates that the dynamics of the naming game on complete graphs is well captured by
 the mean-field approximation.
 Our next result shows more interestingly that this is not true for the process on the infinite one-dimensional lattice,
 suggesting that the critical value for the ratio of the fitnesses decreases as the degree of the graph decreases.
\begin{theor} --
\label{th:1D}
 In one dimension, $p_A > 0$ whenever $\phi > c$ where
 $$ c \ := \ \frac{23 + \sqrt{6097}}{96} \ \approx \ 1.053 \quad \hbox{satisfies} \quad 48 c^2 - 23 c - 29 = 0. $$
\end{theor}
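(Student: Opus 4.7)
By the reflection symmetry $x \mapsto -x$ on $\mathbb{Z}$, it suffices to analyze the motion of the rightmost non-$B$ site, which I denote $R_t$, and to exhibit with positive probability an event on which $R_t \to +\infty$ (together with the symmetric statement for the left front). The argument splits into three main steps.

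\emph{Step 1.} Using \eqref{eq:naming-game} with the normalization $\phi_B = 1$, $\phi_A = \phi$, I first compute the eight edge-transition rates in closed form; e.g., a $(B, AB)$ edge goes to $(AB, AB)$ with probability $\phi/(3+\phi)$ and to $(B, B)$ with probability $3/(3+\phi)$. Starting from the single bilingual at the origin, the first active edge is of type $(B, AB)$; on the good event it produces two adjacent bilinguals, and a short combinatorial argument then shows that with some positive probability $p_0 = p_0(\phi)$ the process reaches a configuration of the form $\ldots, B, A, A, \ldots, A, AB, B, \ldots$ on the right.

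\emph{Step 2.} Conditional on this healthy front, I track the pair $(R_t, \sigma_t)$, where $\sigma_t$ encodes whether the rightmost non-$B$ site is $A$ or $AB$ together with just enough local information about the sites immediately to the left to make the dynamics Markov. The only edges that change this pair are the $(A, AB)$ and $(AB, B)$ edges at the extreme right, and, when the rightmost site is $A$, the $(A, B)$ edge. Using the rates from Step 1 I read off a finite-state chain for $\sigma_t$, compute its stationary distribution $\pi$, and extract the asymptotic front velocity $v(\phi) = \sum_{\sigma} \pi(\sigma) \,\mathbb{E}_\sigma[\Delta R]$ as an explicit rational function of $\phi$.

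\emph{Step 3.} Setting $v(\phi) = 0$ and clearing denominators yields a polynomial equation in $\phi$; the sharpness of the threshold rests on this reducing exactly to $48 \phi^2 - 23 \phi - 29 = 0$, whose unique positive root is $c$. For $\phi > c$, $v(\phi) > 0$, and a standard random-walk comparison (e.g.\ via Foster's criterion applied to the $(R_t, \sigma_t)$ chain, or by dominating $R_t$ from below by a suitably biased nearest-neighbor walk on $\mathbb{Z}$) shows that $R_t \to +\infty$ with probability bounded below by some $q(\phi) > 0$. Combined with Step 1 and the analogous argument for the left front, this gives $p_A \geq p_0(\phi)^2 \, q(\phi) > 0$.

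The main obstacle is Step 2: the front is not genuinely Markov in isolation, because a transition $(A, AB) \to (AB, AB)$ can inject a fresh bilingual into the interior of the $A$-cluster, so the region behind the front does not remain in the clean form ``all $A$'s''. The crux is therefore to choose $\sigma_t$ with just enough memory that (i) the resulting chain is honestly Markov along positive-drift excursions, and (ii) the computation of $v(\phi)$ produces exactly the quadratic $48 \phi^2 - 23 \phi - 29$ rather than a weaker lower bound. Controlling the leakage of interior bilinguals—either by showing that they are absorbed on a renewal time scale faster than $R_t$ drifts, or by coupling $R_t$ with a dominated walk whose critical point matches $c$—is the delicate technical step.
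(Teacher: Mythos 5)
Your overall architecture (front position, finite-state description of the interface, stationary distribution, drift, random-walk comparison, then a two-sided coupling to go from the Heaviside configuration to a single bilingual) matches the paper's in outline, and your Steps 1 and 3 are essentially the paper's final coupling argument run in the opposite order. The genuine gap is in Step 2, and you have correctly located it but proposed the wrong cure. No choice of a finite-memory summary $\sigma_t$ makes the true front ``honestly Markov'': as the paper notes, the dynamics \eqref{eq:naming-game} can create infinitely many interface configurations (bilinguals pile up arbitrarily far ahead of the last all-$A$ point, and the transition $(A,AB)\to(AB,AB)$ reinjects bilinguals behind it), so the process seen from the front has an infinite state space and your exact velocity computation $v(\phi)$ is not available. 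The paper's resolution is not to add memory but to \emph{modify the process}: it forcibly flips to $B$ every site at distance $\geq 3$ to the right of $X_t := \max\{x : \xi_t(y)=A \ \hbox{for all } y\leq x\}$, which collapses the interface to exactly three types; the attractiveness coupling of Lemma \ref{lem:attractive} then shows the truncated process is dominated by the true one, so a positive drift for the truncated front transfers upward. This step --- a monotone comparison with a deliberately handicapped process --- is the missing idea, and neither of your two suggested workarounds (renewal-scale absorption of interior bilinguals, or domination by a biased walk ``whose critical point matches $c$'') supplies it or explains where $c$ would come from.

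Relatedly, your Step 3 misreads the role of the quadratic. Even for the truncated three-type process the exact drift condition is governed by a degree-six polynomial in $\phi$; the quadratic $48\phi^2-23\phi-29$ arises only after a further lower bound, namely replacing $r:=q_A+q_B$ by its minimum value $1/2$ in the stationary equations (Lemmas \ref{lem:rate} and \ref{lem:sign-drift}, using that the drift is increasing in $r$). Consequently $c$ is not a sharp threshold for the true model --- the paper states explicitly that the bound is not sharp and conjectures the critical ratio is $1$ --- so an argument premised on an exact velocity formula vanishing precisely at $c$ is aiming at the wrong target. To repair your proof you would need to (i) introduce the truncated process and the monotone coupling, (ii) compute the three occupation probabilities and the three conditional drifts as in \eqref{eq:config-1} and \eqref{eq:drift}, and (iii) perform the $r\geq 1/2$ reduction to land on the stated quadratic.
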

 The proof of Theorem \ref{th:1D} is based on the analysis of the interface between individuals in different states, which is only
 possible in one dimension.
 The bound $c$ is not sharp but our approach to prove the theorem together with the obvious symmetry of the model implies that the
 critical ratio is between $c^{-1}$ and $c$, which suggests that the critical ratio is equal to one: the probability of a successful
 invasion is positive if and only if $\phi > 1$.
 Finally, we look at the naming game on regular lattices in higher dimensions.
 In this case, using a block construction to compare the process properly rescaled in space and time with oriented site percolation,
 it can be proved that the probability of invasion is positive for $\phi$ sufficiently large.
\begin{theor} --
\label{th:lattice}
 In any dimension, $p_A > 0$ whenever $\phi$ is large enough.
\end{theor}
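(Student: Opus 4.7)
The plan is to follow the standard block-construction strategy: define a coarse-grained process by declaring space-time blocks ``good'' when they contain a large cluster of type $A$ individuals, and compare this coarse process with a supercritical oriented site percolation on $\mathbb{Z}^d \times \mathbb{Z}_+$. The heuristic driving this is that for $\phi$ large, each of the local transitions in \eqref{eq:naming-game} that favors word $A$ occurs with probability arbitrarily close to one: explicitly, $p_{A \to B} = \phi/(\phi+1)\to 1$, the transition $(A,AB)\to(A,A)$ has probability $3\phi/(3\phi+1)\to 1$, the transition $(B,AB)\to(AB,AB)$ has probability $\phi/(\phi+3)\to 1$, and $(AB,AB)\to(A,A)$ has probability $\phi/(\phi+1)\to 1$. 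In the opposite direction, the reverse transitions have probability vanishing in $\phi$.

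The first step is a local spreading lemma: for any integers $L, N\geq 1$ and any $\ep>0$, there exist $T=T(L,N)$ and $\phi_0=\phi_0(L,N,\ep)$ such that for $\phi>\phi_0$, starting from a configuration in which the box $\Lambda_L = [-L,L]^d \cap \mathbb{Z}^d$ is entirely in state $A$ (and arbitrary states outside), the larger box $\Lambda_{L+N}$ is entirely in state $A$ at time $T$ with probability at least $1-\ep$. This is proved by exhibiting a fixed sequence of polynomially many edge updates that, performed in order, fills $\Lambda_{L+N}$ with $A$ using only the favorable transitions listed above; the probability that the Poisson clocks on those edges ring in this order within $[0,T]$ is bounded below by a positive constant, and the conditional probability that each update yields the favorable outcome tends to $1$ uniformly as $\phi\to\infty$. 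Since the total number of favorable outcomes needed is bounded in terms of $L$ and $N$ only, a union bound gives the claim.

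Second, I would create a seed: starting from the initial configuration with a single bilingual individual at $x$, a fixed finite sequence of updates confined to $\Lambda_L(x)$ with positive probability (uniform in $\phi$ bounded away from zero) produces a configuration in which $\Lambda_L(x)$ is entirely in state $A$. Combining this with the spreading lemma, it suffices to show that starting from a fully-$A$ block one can sustain the invasion. For this, choose $L$ and $T$ and consider the renormalized lattice: declare a site $(m,n)\in\mathbb{Z}^d\times\mathbb{Z}_+$ open if the translated block $2Lm+\Lambda_L$ is fully in state $A$ at time $nT$. By the local spreading lemma applied at each step, for any $\ep>0$ one can choose $\phi$ large enough that whenever $(m,n)$ is open, each of its oriented neighbors is open with probability at least $1-\ep$, and this event depends only on the Poisson clocks and coin flips in a bounded space-time neighborhood, so the renormalized process dominates a $k$-dependent oriented site percolation with density $1-\ep$. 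A standard comparison theorem (e.g. Durrett 1995, Theorem 4.3) implies that for $\ep$ small enough, percolation occurs with positive probability, so $A$ survives forever in the original process. Since $A$'s clusters can only grow (transitions never create new $B$'s in the interior of an all-$A$ region when they are adjacent to $A$ or $AB$), survival implies $\lim_t \eta_t(y)=A$ for every $y$ with probability bounded below by the seeding probability times the percolation probability, hence $p_A>0$.

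The main obstacle is the local spreading lemma, and more precisely ruling out that type $B$ individuals encroach from outside $\Lambda_L$ and cancel the progress of $A$ during the time window $[0,T]$. The robustness is gained from two facts: inside $\Lambda_{L+N}$ only boundedly many updates are needed to finish filling with $A$, and every $(B,AB)$ or $(AB,AB)$ interaction at the boundary is resolved in favor of $A$ with probability $1-O(1/\phi)$; taking $\phi$ large after $L,N,T$ are fixed makes the aggregate failure probability arbitrarily small. The percolation step itself is routine once the correct local estimate is in place, so the real work is in setting up the block event and verifying that its complement has probability $O(1/\phi)$.
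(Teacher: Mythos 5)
Your block construction and comparison with $k$-dependent oriented site percolation is essentially the route the paper takes for the survival part (its Lemma \ref{lem:block} uses $2\times2$ spatial blocks, a notion of good interactions $U_n(x,y)<q_A$ that always resolve in favor of $A$, and Theorem 4.3 of \cite{durrett_1995}), and your local spreading estimate is sound: with the time window fixed and $\phi\to\infty$, the aggregate probability of an unfavorable outcome in a bounded space-time region is $O(1/\phi)$. One technicality to watch is that the event ``the block is entirely of type $A$ at time $nT$'' is not measurable with respect to a bounded region of the graphical representation, so the $k$-dependence must be stated for an auxiliary good event defined purely in terms of the Poisson clocks and uniform variables, together with an inclusion of the form: if $(z,n)$ is an $A$-site and the good event occurs, then its oriented neighbors are $A$-sites. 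Your sketch is close enough to this standard formulation.

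The genuine gap is in your final step. The claim that ``$A$'s clusters can only grow'' is false for this model: the transition $(A,B)\to(AB,B)$ occurs with probability $p_{B\to A}>0$, so a monolingual $A$ adjacent to a $B$ can lose its pure-$A$ status, and $(AB,B)\to(B,B)$ can then convert it to $B$. Consequently, percolation of the renormalized $A$-blocks yields only survival, i.e. $\liminf_{t\to\infty}P(\eta_t(y)=A)>0$, and not the fixation $\lim_{t\to\infty}\eta_t(y)=A$ for every $y$ that the definition \eqref{eq:invasion} of $p_A$ requires; a priori the complement of the wet cluster, where $B$ may persist, could revisit any fixed vertex infinitely often. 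The paper flags exactly this point (``This proves survival of word $A$ but not extinction of word $B$ with positive probability'') and closes it in Lemma \ref{lem:dry} by a separate argument: following \cite{durrett_1992} and \cite{lanchier_2012}, one shows that when the percolation parameter is close to one there is, conditionally on percolation, a linearly expanding space-time region that cannot be reached by any path of dry sites starting at level zero in a suitably enlarged oriented graph $\mathcal H_2$; since $\eta_t(x)\neq A$ forces the existence of such a dry path, this upgrades survival to $\lim_{t\to\infty}P(\eta_t(x)=A)=1$. Without an argument of this type your proof establishes a strictly weaker conclusion than the theorem.
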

 Our approach can be improved to get an explicit bound for the critical value for $\phi$ but this bound is far from being optimal.
 We conjecture as in one dimension that the critical ratio is equal to one, which is supported by numerical simulations of the process.
 More generally, we conjecture that, on connected graphs in which the degree is uniformly bounded by a fixed constant $K$, the
 critical value is equal to one in the sense that the probability of invasion is bounded from below by a positive constant that
 only depends on $K$, in disagreement with the mean-field model.

%%%%%%%%%%%%%%%%%%%%%%%%%%%%%%%%%%%%%%%%%%%%%%%%%%%%%%%%%%%%%%%%%%%%%%%%%%%%%%%%%%%%%%%%%%%%%%%%%%%%%%%%%%%%%%%%%%%%%%%%%%%%%%%%%%%%%%%%%%

\section{Preliminary results}
\label{sec:preliminary}

\begin{table}[t!]
\begin{center}
\begin{tabular}{rlll}
    & transitions for $(\zeta_t)$ & condition on $U_n (x, y)$  & possible transitions for $(\xi_t)$ \vspace*{6pt} \\
 1A & $(A, A) \to (A, A)$         & none                       & any \vspace*{6pt} \\
 2A & $(A, AB) \to (A, A)$        & $U_n (x, y) < 1 - q_B$     & 2A, 3A, 3B, 4A, 4B, 5A, 5B, 6B \vspace*{2pt} \\
 2B & $(A, AB) \to (AB, AB)$      & $U_n (x, y) > 1 - q_B$     & 2B, 3B, 4B, 5A, 5B, 6B (excludes 3A, 4A) \vspace*{6pt} \\
 3A & $(A, B) \to (A, AB)$        & $U_n (x, y) < p_{A \to B}$ & 3A, 5A, 5B, 6B \vspace*{2pt} \\
 3B & $(A, B) \to (AB, B)$        & $U_n (x, y) > p_{A \to B}$ & 3B, 5B, 6B (excludes 5A) \vspace*{6pt} \\
 4A & $(AB, AB) \to (A, A)$       & $U_n (x, y) < p_{A \to B}$ & 4A, 5A, 5B, 6B \vspace*{2pt} \\
 4B & $(AB, AB) \to (B, B)$       & $U_n (x, y) > p_{A \to B}$ & 4B, 5B, 6B (excludes 5A) \vspace*{6pt} \\
 5A & $(AB, B) \to (AB, AB)$      & $U_n (x, y) < q_A$         & 5A, 6B \vspace*{2pt} \\
 5B & $(AB, B) \to (B, B)$        & $U_n (x, y) > q_A$         & 5B, 6B \vspace*{6pt} \\
 6B & $(B, B) \to (B, B)$         & none                       & only 6B
\end{tabular}
\end{center}
\caption{\upshape{Coupling between the processes $(\zeta_t)$ and $(\xi_t)$.}}
\label{tab:coupling}
\end{table}

\indent In this section, we state some basic properties about the naming game that will be useful in the subsequent sections.
 A common aspect of all our proofs is to think of the process as being constructed graphically from independent Poisson processes that
 indicate the time of the interactions, a popular idea in the field of interacting particle systems due to Harris \cite{harris_1972}.
 In the case of the naming game, additional collections of uniform random variables must be introduced to
 also indicate the outcome of each interaction.
 More precisely, for every edge $(x, y) \in E$, we let
\begin{itemize}
 \item $\{T_n (x, y) : n \geq 1 \}$ be the arrival times of a rate one Poisson process, and \vspace*{4pt}
 \item $\{U_n (x, y) : n \geq 1 \}$ be independent uniform random variables over $(0, 1)$.
\end{itemize}
 Collections of random variables attached to different edges are also independent.
 The process is then constructed as follows:
 at time $T_n (x, y)$, the states at $x$ and $y$ are simultaneously updated according to the transitions in the left column
 of Table \ref{tab:coupling}.
 Since interactions involving both words can each result in two different outcomes depending on whether word $A$ or word $B$
 is pronounced, the random variable $U_n (x, y)$ is used to account for the probability of each outcome as indicated by the
 conditions in the middle column of the table where
\begin{equation}
\label{eq:short}
  q_A \ := \ p_{AB \to B} \ p_{A \to B} \quad \hbox{and} \quad q_B \ := \ p_{AB \to A} \ p_{B \to A}.
\end{equation}
 Note that $q_A$ is the probability that word $A$ is pronounced in a conversation involving a bilingual individual and a type
 $B$ individual.
 One can easily check that the conditions in the table indeed produce the desired transition probabilities in \eqref{eq:naming-game}.
 Based on this graphical representation, processes with different parameters or starting from different initial configurations
 can be coupled to prove important monotonicity results.
 The next lemma shows for instance a certain monotonicity of the naming game with respect to its initial configuration, which can
 be viewed as the analog of attractiveness for spin systems.
 This result will be useful in the proof of Theorem \ref{th:1D}.

\begin{lemma} --
\label{lem:attractive}
 Let $(\zeta_t)$ and $(\xi_t)$ be two copies of the naming game. Then,
 $$ P \,(\xi_t (x) = A) \leq P \,(\zeta_t (x) = A) \quad \hbox{and} \quad P \,(\xi_t (x) = B) \geq P \,(\zeta_t (x) = B) $$
 for all $(x, t) \in V \times (0, \infty)$ provided this holds for all $(x, t) \in V \times \{0 \}$.
\end{lemma}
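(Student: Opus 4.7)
The plan is to establish a pointwise monotone coupling, from which the marginal inequalities follow immediately. I introduce the total order $B \prec AB \prec A$ on the single-vertex state space, extended coordinatewise to full configurations, and aim to show that if the two initial configurations can be coupled so that $\xi_0(y) \preceq \zeta_0(y)$ a.s. for every vertex $y$, then the two processes can be realized jointly so that $\xi_t(y) \preceq \zeta_t(y)$ a.s. for every $y$ and every $t \geq 0$. The lemma is an immediate consequence since, under such a coupling, $\{\xi_t(x) = A\} \subseteq \{\zeta_t(x) = A\}$ and $\{\zeta_t(x) = B\} \subseteq \{\xi_t(x) = B\}$.

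The first step is the reduction from the given hypothesis on time-zero marginals to a joint coupling of the initial configurations with pointwise dominance. Since the single-vertex order is total and the state space is finite, marginal dominance at vertex $y$ is equivalent, by Strassen's theorem, to the existence of a coupling of $\xi_0(y)$ and $\zeta_0(y)$ with pointwise dominance a.s.; taking such couplings independently across vertices yields the desired joint initial coupling.

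The core of the argument is the coupling of the dynamics via the common graphical construction introduced just before the lemma: both processes are driven by the same Poisson arrivals $T_n(x, y)$ and the same uniforms $U_n(x, y)$. It suffices to show that each update preserves the coordinatewise order. Since an update at edge $(x, y)$ modifies only the two endpoints, this reduces to verifying that for every edge configuration of $\zeta$, every dominated edge configuration of $\xi$, and every value $u \in (0, 1)$, the post-update edge configurations remain ordered. Table \ref{tab:coupling} is precisely the bookkeeping of this verification: each row lists, for the given $\zeta$-transition, the $\xi$-transitions triggered by the same $u$, and the tabulated entries are exactly those that preserve the order.

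The main obstacle is this finite but extensive case analysis. The algebraic backbone is the nesting of the thresholds that appear in the update rules, which follows from the identity $q_A + q_B = 1/2$ (a direct computation from \eqref{eq:short}); this yields $q_A < p_{A \to B} < 1 - q_B$ together with the symmetric chain $q_B < p_{B \to A} < 1 - q_A$. For example, in row 2A with $\zeta = (A, AB)$, if $\xi$ sits in the dominated state $(AB, B)$ and $u < 1 - q_B$, these inequalities force $\xi$'s transition to be either 5A (when $u < q_A$) or 5B (when $q_A < u < 1 - q_B$), producing $(AB, AB)$ or $(B, B)$, both still dominated by the post-update $\zeta$-state $(A, A)$. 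The other rows are verified in exactly the same fashion. Once the table is confirmed, each Poisson event preserves pointwise dominance, and since every vertex is touched by only finitely many events in any bounded time interval, the order propagates for all $t \geq 0$, completing the proof.
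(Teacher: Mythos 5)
Your strategy is the paper's own: drive both processes with the same Poisson clocks and the same uniforms, note that the relation ``$\xi(z)=A\Rightarrow\zeta(z)=A$ and $\zeta(z)=B\Rightarrow\xi(z)=B$'' is exactly coordinatewise domination for the total order $B\prec AB\prec A$, and check case by case (this is precisely Table~\ref{tab:coupling}) that every simultaneous update preserves it, the only real input being the nesting of the thresholds. However, your justification of that nesting is wrong: the ``identity'' $q_A+q_B=1/2$ is false. From \eqref{eq:short} one computes $q_A+q_B=\phi\,(\phi+3)^{-1}+(3\phi+1)^{-1}$, which is the quantity $r$ of Lemma~\ref{lem:rate}; it ranges over $[1/2,1)$ and equals $1/2$ only at $\phi=1$. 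The chain $q_A\leq p_{A\to B}\leq 1-q_B$ (and its mirror image) is nevertheless true, but for the elementary reasons the paper gives: $q_A=p_{AB\to B}\,p_{A\to B}\leq p_{A\to B}$ since $p_{AB\to B}\leq 1$, and $1-q_B=p_{A\to B}+p_{B\to A}\,(1-p_{AB\to A})\geq p_{A\to B}$. Replace the false identity by these one-line computations and the case analysis stands.

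The second, more structural problem is your Strassen reduction. Coupling $\xi_0(y)$ with $\zeta_0(y)$ monotonically at each vertex and then taking the product over vertices produces initial data whose first coordinate is distributed as $\bigotimes_y\mathcal L(\xi_0(y))$, not as $\mathcal L(\xi_0)$, unless $\xi_0$ is a product measure; since $P(\xi_t(x)=A)$ depends on the full joint law of $\xi_0$ and not only on its site marginals, this does not yield the lemma for general initial laws. In fact the statement read literally fails for correlated initial data: on a single edge with $\phi$ large, let $\zeta_0$ be uniform on $\{(A,A),(B,B)\}$ (both absorbing, so $P(\zeta_t(x)=A)=1/2$ for all $t$) and $\xi_0$ uniform on $\{(A,B),(B,A)\}$; the site marginals coincide at time $0$, yet starting from $(A,B)$ the chain absorbs at $(A,A)$ with probability tending to $1$ as $\phi\to\infty$, so $P(\xi_t(x)=A)>1/2$ eventually. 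The hypothesis must therefore be read the way the paper's proof uses it: the initial configurations are realized on a common probability space with $\xi_0(z)\preceq\zeta_0(z)$ for all $z$ almost surely, which is exactly what is available in the one place the lemma is applied (Section~\ref{sec:1D}, where both processes start from the same deterministic configuration). A last, minor point: on infinite graphs ``finitely many events per vertex per unit time'' does not by itself order the updates for your induction, because influence can propagate through infinitely many vertices; you need Harris's decomposition of short time windows into almost surely finite non-interacting clusters, which the paper invokes explicitly.
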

\begin{proof}
 The result follows from a coupling of the two processes that we construct conjointly from the same graphical representation.
 That is, we assume that
 $$ (\xi_0 (z) = A \ \hbox{implies} \ \zeta_0 (z) = A) \quad \hbox{and} \quad
    (\zeta_0 (z) = B \ \hbox{implies} \ \xi_0 (z) = B) \quad \hbox{for all} \ z \in V $$
 and that both processes are constructed from the same Poisson processes and the same collections of uniform random variables.
 The construction given by Harris \cite{harris_1972}, which relies on arguments from percolation theory, implies that, for any
 small enough time interval, there exists a partition of the vertex set into almost surely finite connected components such that
 any two vertices in two different components do not influence each other in the time interval.
 Since the number of interactions in each component in the time interval is almost surely finite, the result can be proved for
 each of these finite space-time regions by induction. Assume that
 $$ (\xi_{t-} (z) = A \ \hbox{implies} \ \zeta_{t-} (z) = A) \quad \hbox{and} \quad
    (\zeta_{t-} (z) = B \ \hbox{implies} \ \xi_{t-} (z) = B) \quad \hbox{for all} \ z \in V $$
 for some arrival time $t := T_n (x, y)$.
 To prove that the previous relationship between both processes is preserved at time $t$, we observe that the interaction between
 the individuals at $x$ and $y$ can result in ten different transitions depending on the state of both individuals.
 These transitions are listed in the left column of Table \ref{tab:coupling} and can be divided into two types:
\begin{itemize}
 \item the transitions that create an $A$ or remove a $B$, which are labeled 2A--5A, \vspace*{2pt}
 \item the transitions that create a $B$ or remove an $A$, which are labeled 2B--5B.
\end{itemize}
 As previously mentioned, except for transitions 1A and 6B, every other pair of states for the neighbors can result in two
 possible transitions depending on whether word $A$ or word $B$ is pronounced during the conversation.
 The last column of the table indicates that for all possible simultaneous updates of both processes, the ordering between both
 processes is preserved at time $t$, i.e.,
 $$ (\xi_t (z) = A \ \hbox{implies} \ \zeta_t (z) = A) \quad \hbox{and} \quad
    (\zeta_t (z) = B \ \hbox{implies} \ \xi_t (z) = B) \quad \hbox{for all} \ z \in V. $$
 To prove, as indicated in the last column, that a transition 2B in the first process indeed excludes the transitions 3A and 4A
 in the second process, we observe that
 $$ \begin{array}{rcl}
     1 - q_B & = & p_{A \to B} + p_{B \to A} - p_{AB \to A} \ p_{B \to A} \vspace*{4pt} \\ & = &
     p_{A \to B} + p_{B \to A} \ (1 - p_{AB \to A}) \ \geq \ p_{A \to B} \end{array} $$
 which gives the implication
\begin{equation}
\label{eq:attractive-1}
  U_n (x, y) > 1 - q_B \quad \hbox{implies that} \quad U_n (x, y) > p_{A \to B}
\end{equation}
 and proves the exclusion of type 3A and 4A transitions. Similarly,
\begin{equation}
\label{eq:attractive-2}
  U_n (x, y) > p_{A \to B} \quad \hbox{implies that} \quad U_n (x, y) > p_{AB \to B} \ p_{A \to B} = q_A
\end{equation}
 showing that the transitions 3B and 4B in the first process exclude transition 5A in the second process.
 As previously mentioned, the lemma follows from the fact that all possible simultaneous updates of both processes given in the
 last column preserve the desired ordering.
\end{proof}

%%%%%%%%%%%%%%%%%%%%%%%%%%%%%%%%%%%%%%%%%%%%%%%%%%%%%%%%%%%%%%%%%%%%%%%%%%%%%%%%%%%%%%%%%%%%%%%%%%%%%%%%%%%%%%%%%%%%%%%%%%%%%%%%%%%%%%%%%%

\section{The naming game on finite graphs}
\label{sec:finite}

\indent This section is devoted to the proofs of Theorem \ref{th:finite-graphs} and Theorem \ref{th:complete-graphs} about
 the naming game on finite connected graphs.
 The key to proving the first theorem is to show that a certain process that depends on the difference between the number
 of individuals using word $A$ and the number of individuals using word $B$ is a supermartingale with respect to the natural
 filtration of the naming game, which allows to directly deduce the theorem from the optimal stopping theorem.
 To prove the second theorem which specializes in the process on complete graphs, the idea is to observe that, as long as
 bilingual individuals do not interact with each other, there is no type $A$ individual in the population and the number
 of bilingual individuals evolves like a subcritical birth and death process that goes extinct quickly.
 Throughout this section, $A_t$ and $B_t$ denote respectively the number of individuals of type $A$ and type $B$ at time $t$,
 and we let
 $$ \begin{array}{rcl}
     e_t (X, Y) & := & \hbox{number of edges connecting a type $X$ individual} \vspace*{2pt} \\
                &    & \hbox{and a type $Y$ individual at time $t$} \end{array} $$
 for all $X, Y \in \{A, B, AB \}$.
 To motivate our proof of the first theorem and explain the assumption, we observe that the transitions labeled 2A--5A
 in Table \ref{tab:coupling}, which are the transitions that increase the number of individuals using $A$ or decrease the
 number of individuals using $B$, all occur with probability at least one half if and only if $\phi > 3$.
 As shown in the next lemma, this property can be used to construct a certain supermartingale with respect to the natural
 filtration of the process: the $\sigma$-algebra $\mathcal F_t$ generated by the realization of the naming game until time $t$.

\begin{lemma} --
\label{lem:supermartingale}
 Assume that $\phi \geq 3$.
 Then, for all $s > t$,
 $$ E \,(M_s \,| \,\mathcal F_t) \leq M_t \quad \hbox{where} \quad M_t := a^{A_t - B_t} \ \ \hbox{and} \ \ a := 3 / \phi. $$
\end{lemma}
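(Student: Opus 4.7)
The plan is to verify the infinitesimal supermartingale criterion $\mathcal L M \leq 0$ pointwise in $\eta$, where $\mathcal L$ denotes the generator of the finite-state Markov chain $(\eta_t)$ and $M$ is regarded as the function $\eta \mapsto a^{A(\eta) - B(\eta)}$ on $\{A, B, AB\}^V$. Since this function is bounded, Dynkin's formula then yields $E \,(M_s \,| \,\mathcal F_t) \leq M_t$ for every $s > t$.

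Because each edge fires independently at rate one, the generator decomposes as
$$ \mathcal L M(\eta) \ = \ M(\eta) \sum_{e \in E} \bigl(E \,[a^{\Delta_e S}] - 1\bigr), $$
where $\Delta_e S$ is the random change of $S := A - B$ produced by one activation of the edge $e$. Edges of type $(A, A)$ and $(B, B)$ contribute zero, so it suffices to check that $E \,[a^{\Delta_e S}] \leq 1$ for each of the four remaining edge types. Reading the transition probabilities off Table \ref{tab:coupling} and using the shortcuts \eqref{eq:short}, these take the form
$$ \begin{array}{ll}
(A, B): & p_{A \to B} \,a + p_{B \to A}/a \ \leq \ 1, \vspace*{4pt} \\
(A, AB): & (1 - q_B) \,a + q_B/a \ \leq \ 1, \vspace*{4pt} \\
(AB, B): & q_A \,a + (1 - q_A)/a \ \leq \ 1, \vspace*{4pt} \\
(AB, AB): & p_{A \to B} \,a^2 + p_{B \to A}/a^2 \ \leq \ 1.
\end{array} $$

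The value $a = 3/\phi$ is chosen precisely so that the third inequality becomes an equality: the quadratic $q_A \,x^2 - x + (1 - q_A)$ has exactly the two roots $x = 1$ and $x = 3/\phi$, so every $(AB, B)$ edge contributes zero drift identically. The first two inequalities then reduce, after clearing denominators, to nonnegative multiples of $3 - \phi$, and therefore hold as soon as $\phi \geq 3$.

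I expect the main technical point to be the fourth inequality, for $(AB, AB)$ edges: here $\Delta_e S = \pm 2$, and the rare $-2$ event is amplified by the convexity of $x \mapsto a^x$, so extra care is required. Substituting $a = 3/\phi$ turns the condition into a polynomial inequality in $\phi$ whose critical factor is again $\phi - 3$, matching the hypothesis of the lemma. Once these four estimates are in hand, $\mathcal L M(\eta) \leq 0$ holds at every configuration $\eta$, and the supermartingale property $E \,(M_s \,| \,\mathcal F_t) \leq M_t$ follows from Dynkin's formula and the boundedness of $M$ on the finite state space.
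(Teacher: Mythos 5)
Your setup is exactly the paper's: decompose the generator over edges, note that $(A,A)$ and $(B,B)$ edges contribute nothing, and reduce the supermartingale property to the four one-edge inequalities you list. These are precisely the four bracketed expressions in the paper's display \eqref{eq:martingale-0}, and your treatment of the first three agrees with \eqref{eq:martingale-1}--\eqref{eq:martingale-3}; in particular your observation that $a = 3/\phi$ is the second root of $q_A \,x^2 - x + (1 - q_A)$, so that the $(AB, B)$ term vanishes identically, is correct and is the reason for this choice of $a$.

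The gap is the fourth inequality, which you defer and whose resolution you misstate. With $a = 3/\phi$, $p_{A \to B} = \phi \,(\phi + 1)^{-1}$ and $p_{B \to A} = (\phi + 1)^{-1}$, the condition $p_{A \to B} \,a^2 + p_{B \to A} \,a^{-2} \leq 1$ is equivalent, after clearing denominators, to $\phi^3 - 9 \phi^2 - 9 \phi + 81 \leq 0$, i.e.
$$ (\phi - 3)(\phi + 3)(\phi - 9) \ \leq \ 0, $$
so the critical factor is $\phi - 9$, not $\phi - 3$: the inequality holds for $3 \leq \phi \leq 9$ and fails for $\phi > 9$. This is not an artifact of the method. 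At the all-bilingual configuration, which is reachable with positive probability from a single bilingual individual in a sea of $B$'s, every edge is of type $(AB, AB)$ and $M_t = 1$, so the drift of $M$ is strictly positive when $\phi > 9$ and the asserted supermartingale property genuinely fails there. The paper's own proof contains the same unacknowledged restriction: applying \eqref{eq:martingale-4} to the $(AB, AB)$ term requires $1/\sqrt{\phi} \leq a$, and $3/\phi \geq 1/\sqrt{\phi}$ holds only when $\phi \leq 9$. To push the argument through for all $\phi \geq 3$ one must modify the statement, for instance by taking $a := \max \,(3/\phi, 1/\sqrt{\phi})$: the $(AB, B)$, $(A, B)$ and $(A, AB)$ terms are nonpositive as soon as $3/\phi \leq a \leq 1$, $1/\phi \leq a \leq 1$ and $(3 \phi)^{-1} \leq a \leq 1$ respectively, while the $(AB, AB)$ term needs $1/\sqrt{\phi} \leq a \leq 1$, and this choice of $a$ satisfies all four constraints for $\phi \geq 3$. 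The price is that the downstream optional stopping bound becomes $1 - \max \,(3/\phi, 1/\sqrt{\phi})$ rather than $1 - 3/\phi$ when $\phi > 9$.
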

\begin{proof}
 Using the transition probabilities in Table \ref{tab:coupling}, we get
 $$ \begin{array}{l}
    \lim_{h \to 0} \ h^{-1} \ E \,(M_{t + h} - M_t \,| \,\mathcal F_t) \ = \
    \sum_{j = -2}^2 \ (a^j - 1) \,M_t \ \lim_{h \to 0} \ h^{-1} \ P \,(M_{t + h} = M_t + j \,| \,\mathcal F_t) \vspace*{4pt} \\ \hspace*{20pt} = \
     (a - 1) \,M_t \,(e_t (A, AB) \,(1 - q_B) + e_t (A, B) \,p_{A \to B} + e_t (B, AB) \,q_A) \vspace*{4pt} \\ \hspace*{40pt} + \
     (a^{-1} - 1) \,M_t \,(e_t (B, AB) \,(1 - q_A) + e_t (A, B) \,p_{B \to A} + e_t (A, AB) \,q_B) \vspace*{4pt} \\ \hspace*{40pt} + \
      M_t \,e_t (AB, AB) \,((a^2 - 1) \,p_{A \to B} + (a^{-2} - 1) \,p_{B \to A}). \end{array} $$
 Re-arranging the terms with respect to the type of edges, this becomes
\begin{equation}
\label{eq:martingale-0}
  \begin{array}{l}
    \lim_{h \to 0} \ h^{-1} \ E \,(M_{t + h} - M_t \,| \,\mathcal F_t) \ = \
     M_t \,e_t (A, AB) \,((a - 1) (1 - q_B) + (a^{-1} - 1) \,q_B) \vspace*{4pt} \\ \hspace*{120pt} + \
     M_t \,e_t (B, AB) \,((a - 1) \,q_A + (a^{-1} - 1) (1 - q_A)) \vspace*{4pt} \\ \hspace*{120pt} + \
     M_t \,e_t (A, B) \,((a - 1) \,p_{A \to B} + (a^{-1} - 1) \,p_{B \to A}) \vspace*{4pt} \\ \hspace*{120pt} + \
     M_t \,e_t (AB, AB) \,((a^2 - 1) \,p_{A \to B} + (a^{-2} - 1) \,p_{B \to A}). \end{array}
\end{equation}
 First, we observe that $q_B = (3 \phi + 1)^{-1}$ and, for all $\phi \geq 1/3$,
\begin{equation}
\label{eq:martingale-1}
  \begin{array}{l}
     (a - 1) (1 - q_B) + (a^{-1} - 1) \,q_B \ = \ a^{-1} \,(a - 1) ((1 - q_B) \,a - q_B) \vspace*{4pt} \\ \hspace*{60pt}
                                            \ = \ a^{-1} \,(3 \phi + 1)^{-1} \,(a - 1) (3 \phi \,a - 1) \ \leq \ 0 \ \ \hbox{for all} \ \ (3 \phi)^{-1} \leq a \leq 1. \end{array}
\end{equation}
 Similarly, $q_A = \phi \,(\phi + 3)^{-1}$ and, for all $\phi \geq 3$, we have
\begin{equation}
\label{eq:martingale-2}
  \begin{array}{l}
     (a - 1) \,q_A + (a^{-1} - 1) (1 - q_A) - 1 \ = \ a^{-1} \,(a - 1) (q_A \,a - (1 - q_A)) \vspace*{4pt} \\ \hspace*{60pt}
                                                \ = \ a^{-1} \,(\phi + 3)^{-1} \,(a - 1) (\phi \,a - 3) \ \leq \ 0 \ \ \hbox{for all} \ \ 3 \phi^{-1} \leq a \leq 1. \end{array}
\end{equation}
 Finally, $p_{A \to B} = \phi \,(\phi + 1)^{-1}$ and, for all $\phi \geq 1$, we have
\begin{equation}
\label{eq:martingale-3}
  \begin{array}{l}
     (a - 1) \,p_{A \to B} + (a^{-1} - 1) \,p_{B \to A} \ = \ a^{-1} \,(a - 1) (p_{A \to B} \,a - p_{B \to A}) \vspace*{4pt} \\ \hspace*{60pt}
                                                        \ = \ a^{-1} \,(\phi + 1)^{-1} \,(a - 1) (\phi \,a - 1) \ \leq \ 0 \ \ \hbox{for all} \ \ \phi^{-1} \leq a \leq 1 \end{array}
\end{equation}
 from which we also deduce that, for all $\phi \geq 1$,
\begin{equation}
\label{eq:martingale-4}
  (a - 1) \,p_{A \to B} + (a^{-1} - 1) \,p_{B \to A} \ \leq 0 \ \ \hbox{for all} \ \ 1/ \sqrt{\phi} \leq a \leq 1.
\end{equation}
 Plugging \eqref{eq:martingale-1}--\eqref{eq:martingale-4} into \eqref{eq:martingale-0}, we conclude that
 $$ \begin{array}{l} \lim_{h \to 0} \ h^{-1} \ E \,(M_{t + h} - M_t \,| \,\mathcal F_t) \ \leq \ 0 \quad \hbox{for all $\phi \geq 3$ and $a = 3 / \phi$} \end{array} $$
 showing that $(M_t)$ is a supermartingale for $a = 3 / \phi$.
\end{proof} \\ \\

\pagebreak

\noindent Applying the optimal stopping theorem to $(M_t)$ gives the following result.
\begin{lemma} --
\label{lem:stopping}
 For all $\phi > 3$, we have
 $$ \begin{array}{l}
     p_A \ := \ \inf_{x \in V} \,P_x \,(B_t = 0 \ \hbox{for some} \ t) \ \geq \ 1 - 3 / \phi \ > \ 0. \end{array} $$
\end{lemma}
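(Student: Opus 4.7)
The plan is to apply the optional stopping theorem to the nonnegative bounded supermartingale $M_t = a^{A_t - B_t}$ with $a = 3/\phi \in (0,1)$ (provided by Lemma~\ref{lem:supermartingale}) at the absorption time of the naming game. First, I would observe that on a finite connected graph with $N = |V|$ vertices, the naming game is a finite-state continuous-time Markov chain whose only absorbing configurations are all-$A$ and all-$B$: every other configuration can reach one of these via a positive-probability sequence of transitions (for instance, a sequence of conversations that successively converts every vertex to a common word). Hence the absorption time $\tau := \inf\{t : A_t = N \text{ or } B_t = N\}$ is almost surely finite, and $(M_t)$ takes values in the bounded range $[a^N, a^{-N}]$.

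Next, I would evaluate $E_x[M_\tau]$. Starting from the lemma's initial configuration, $A_0 = 0$ and $B_0 = N-1$, so $M_0 = a^{-(N-1)}$, a quantity independent of the choice of the initial bilingual vertex $x$. At time $\tau$, $M_\tau$ equals $a^N$ on the event of consensus at $A$ and $a^{-N}$ on the event of consensus at $B$; letting $q := P_x(B_\tau = N)$, the supermartingale inequality $E_x[M_\tau] \leq M_0$, applied first to the truncated bounded stopping times $\tau \wedge T$ and then passed to the limit $T \to \infty$ via dominated convergence, yields $(1-q)\,a^N + q\,a^{-N} \leq a^{-(N-1)}$. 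Discarding the nonnegative first term gives $q \leq a = 3/\phi$, so $P_x(\text{consensus on } A) \geq 1 - 3/\phi$.

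The conclusion follows because consensus on $A$ forces $B_t = 0$ for all sufficiently large $t$, whence $P_x(B_t = 0 \text{ for some } t) \geq P_x(\text{consensus on } A) \geq 1 - 3/\phi$; since the bound is uniform in $x$, taking the infimum over $V$ preserves it. The only technical point, which I expect to be the main (though essentially cosmetic) obstacle, is the application of optional stopping at the unbounded random time $\tau$; this is handled routinely by the truncation argument above, thanks to the deterministic bound $M_t \leq a^{-N}$ that makes dominated convergence immediate.
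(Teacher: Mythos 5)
Your proposal is correct and follows essentially the same route as the paper: apply the optional stopping theorem to the supermartingale $M_t = a^{A_t - B_t}$ of Lemma \ref{lem:supermartingale} at the absorption time, use $M_0 = a^{-(N-1)}$, and extract the bound $1 - 3/\phi$ from the resulting inequality. The only differences are cosmetic: you justify optional stopping via truncation and dominated convergence where the paper simply invokes the theorem, and you obtain the bound by discarding the nonnegative term $a^N(1-q)$ to get $q \le a$ directly, whereas the paper solves the full inequality and then bounds $(1-a)(1-a^{2N})^{-1} \ge 1-a$; both yield the same conclusion.
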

\begin{proof}
 First, we introduce the stopping time
 $$ T \ := \ \inf \,\{t : A_t - B_t \in \{-N, N \} \} $$
 where $N$ denotes the number of vertices.
 Since the naming game on any finite graph converges almost surely to the configuration in which all individuals are monolingual of the
 same type, the stopping time $T$ is almost surely finite.
 Using in addition that the process $(M_t)$ is a supermartingale according to Lemma \ref{lem:supermartingale}, we deduce from the optimal
 stopping theorem that
 $$ E \,M_T \ = \ E \,(a^{X_T - Y_T}) \ \leq \ a^N \,p_A + a^{-N} \,(1 - p_A) \ \leq \ E \,M_0 \ = \ a^{- (N - 1)} $$
 for all $a = 3 / \phi < 1$. In particular,
 $$ \begin{array}{l}
     p_A  \ \geq \ (a^{- (N - 1)} - a^{-N})(a^N - a^{-N})^{-1} \vspace*{4pt} \\ \hspace*{50pt} = \
                 (1 - a)(1 - a^{2N})^{-1} \ \geq \ 1 - a \ = \ 1 - 3 / \phi \ > \ 0 \end{array} $$
 which completes the proof of the lemma.
\end{proof} \\ \\
 Theorem \ref{th:finite-graphs} directly follows from Lemma \ref{lem:stopping} by observing that the probability $p_A$ in the statement
 of the lemma is precisely the probability $p_A$ in the statement of the theorem.
 We now focus on the naming game on the complete graph.
 Note that in this case the number of edges of each type can be expressed as a function of the number of individuals of each type,
 therefore $(A_t, B_t)$ is now a continuous-time Markov chain.
 As previously mentioned, to prove that $p_A$ tends to zero as the number of vertices goes to infinity, the idea is to
 observe that, as long as bilingual individuals do not interact with each other, there is no type $A$ individual in the population
 and the number of bilingual individuals evolves like a subcritical birth and death process.
 To make the argument precise, we introduce the birth and death process $(Z_t)$ starting with a single individual and with birth
 rate $N q_A$ and death rate $N (1 - q_A)$, i.e.,
 $$ \begin{array}{rcll}
    \lim_{h \to 0} \ h^{-1} \ P \,(Z_{t + h} = j \,| \,X_t = i)
     & = & i \,N q_A & \hbox{for} \ \ j = i + 1 \vspace*{2pt} \\
     & = & i \,N (1 - q_A) & \hbox{for} \ \ j = i - 1. \end{array} $$
 We start with the following preliminary result about the number of jumps $J$ before extinction of subcritical birth and death processes.
\begin{lemma} --
\label{lem:birth-death}
 Fix $\phi < 3$ and $\ep > 0$, and let $J := \ \card \{t : Z_t \neq Z_{t-} \}$. Then,
 $$ P \,(J \geq 2n_{\ep} + 1 \,| \,Z_0 = 1) < \ep \quad \hbox{for all $n_{\ep}$ large}. $$
\end{lemma}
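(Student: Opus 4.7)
The plan is to reduce the lemma to a standard statement about subcritical random walks. Because the birth rate $i N q_A$ and the death rate $i N (1 - q_A)$ from state $i$ share the common factor $i N$, the embedded jump chain $(\tilde Z_k)_{k \geq 0}$ obtained by recording the state of $(Z_t)$ just after each of its jumps is a discrete-time random walk on $\{0, 1, 2, \ldots \}$, absorbed at $0$, with transition probabilities
$$ P \,(\tilde Z_{k+1} = j+1 \,| \,\tilde Z_k = j) = q_A, \qquad P \,(\tilde Z_{k+1} = j-1 \,| \,\tilde Z_k = j) = 1 - q_A $$
for every $j \geq 1$. In particular, $J$ is exactly the absorption time of this jump chain starting from $1$, so the continuous-time dynamics and the rate $N$ drop out completely.

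Under the hypothesis $\phi < 3$ we have $q_A = \phi \,(\phi + 3)^{-1} < 1/2$, so the jump chain has strictly negative drift. A standard gambler's-ruin calculation -- or, equivalently, the strong law of large numbers applied to the unabsorbed walk $S_k = 1 + \xi_1 + \cdots + \xi_k$ with i.i.d. $\pm 1$ increments of mean $2 q_A - 1 < 0$ -- shows that the hitting time of $0$ from state $1$ is almost surely finite. Consequently $P \,(J < \infty \,| \,Z_0 = 1) = 1$, and continuity from above of the probability measure yields
$$ P \,(J \geq 2n + 1 \,| \,Z_0 = 1) \ \longrightarrow \ 0 \quad \hbox{as} \ \ n \to \infty, $$
from which the conclusion follows by choosing $n_{\ep}$ sufficiently large.

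The only non-trivial ingredient is the almost sure finiteness of the absorption time, which is a classical consequence of negative drift, so no serious obstacle is anticipated; if an explicit rate of decay were required, one could invoke the well-known fact that the first-passage distribution of a biased random walk has exponential tails, but the lemma only needs the qualitative statement. The restriction to odd values $2 n_{\ep} + 1$ in the statement simply reflects the parity constraint that, with $Z_0 = 1$ and unit jumps, the number of jumps needed to reach $0$ is always odd.
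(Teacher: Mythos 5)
Your proof is correct, and it shares the paper's key observation -- that $\phi < 3$ forces $q_A = \phi(\phi+3)^{-1} < 1 - q_A$, so the embedded jump chain is a subcritical nearest-neighbour walk absorbed at $0$ and $J$ is its almost surely finite absorption time -- but it finishes differently. The paper, after establishing $P(J < \infty) = 1$, goes on to bound $P(J = 2n+1)$ explicitly by $\binom{2n}{n} q_A^n (1-q_A)^{n+1} \leq (4 q_A (1-q_A))^n / \sqrt{\pi n}$ via a path-counting argument and Stirling's formula, and then sums the resulting geometrically decaying tail. You instead stop at the qualitative statement $P(J < \infty) = 1$ and invoke continuity from above for the decreasing events $\{J \geq 2n+1\}$, whose intersection is $\{J = \infty\}$. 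Your softer route is perfectly rigorous and in fact suffices for everything the paper does with this lemma: in the proof of Lemma \ref{lem:collision}, only the existence of some $n_{\ep}$ with $P(J \geq 2n_{\ep}+1) < \ep$ is used, not any rate of decay. What the paper's combinatorial estimate buys is an explicit exponential tail (and hence an explicit choice of $n_{\ep}$ as a function of $\ep$ and $\phi$), which could matter if one wanted quantitative control on how large $N$ must be in Theorem \ref{th:complete-graphs}; since no such quantification is pursued, your argument is a legitimate and slightly more economical alternative.
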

\begin{proof}
 First, we note that, since $\phi < 3$,
 $$ q_A \ = \ p_{AB \to B} \ p_{A \to B} \ = \ \frac{\phi_A}{2 \,(\phi_{AB} + \phi_B)} \ = \ \frac{\phi}{\phi + 3} \ < \ \frac{3}{\phi + 3} \ = \ 1 - q_A $$
 from which it follows that
 $$ P \,(J < \infty) \ = \ P \,(Z_t = 0 \ \hbox{for some} \ t \ | \ Z_0 = 1) \ = \ 1. $$
 Moreover, using that the number of paths of length $2n$ not crossing 0 is bounded by the total number of paths of length $2n$ together
 with Stirling's formula, we get
 $$ P \,(J = 2n + 1) \ \leq \ {2n \choose n} \ q_A^n \ (1 - q_A)^{n + 1} \ \leq \ \frac{(4 \,q_A \,(1 - q_A))^n}{\sqrt{\pi n}} $$
 for all $n$ large.
 In particular,
 $$ P \,(J \geq 2n_{\ep} + 1) \ \leq \ P \,(J = \infty) \ + \ \sum_{n = n_{\ep}}^{\infty} \ \frac{(4 \,q_A \,(1 - q_A))^n}{\sqrt{\pi n}} \ < \ \ep $$
 for all $n_{\ep}$ large since $4 \,q_A (1 - q_A) < 1$.
\end{proof} \\ \\
 The reason for introducing the birth and death process above is that the number of bilingual individuals evolves precisely according
 to this process until two bilingual individuals interact with each other, an event that we call a collision.
 In particular, it can be deduced from the previous lemma that the probability that a collision ever happens is small when $N$ is large,
 which is also a bound for the probability that word $A$ outcompetes word $B$.
 To prove this result, we let
 $$ \tau_C \ := \ \inf \,\{t : t = T_n (x, y) \ \hbox{for some} \ x, y \in V \ \hbox{with} \ \eta_{t-} (x) = \eta_{t-} (y) = AB \}. $$
 be the time of the first collision.
\begin{lemma} --
\label{lem:collision}
 Fix $\phi < 3$ and $\ep > 0$. Then,
 $$ P \,(\tau_C < \infty \,| \,A_0 = 0 \ \hbox{and} \ B_0 = N - 1) < 2 \ep \quad \hbox{for all $N$ large}. $$
\end{lemma}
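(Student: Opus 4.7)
The plan is to couple the bilingual count in the naming game, call it $K_t$, with the birth-death process $(Z_t)$ of Lemma \ref{lem:birth-death}, and then show that during the short time before $K_t$ goes extinct, the per-step probability of an $(AB, AB)$-collision is of order $1/N$, which sums to a negligible quantity as $N \to \infty$.

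Before any collision occurs, no type $A$ individual has yet been produced, so the only edges at which something can happen are the $K_t (N - K_t)$ edges of type $(AB, B)$ and the $\binom{K_t}{2}$ edges of type $(AB, AB)$. From transitions 5A and 5B in Table \ref{tab:coupling}, each $(AB, B)$ interaction increases $K_t$ by one with probability $q_A$ and decreases it by one with probability $1 - q_A$, while any $(AB, AB)$ interaction is a collision. Since the embedded discrete-time chain of $K_t$ restricted to births and deaths has the same Bernoulli$(q_A, 1 - q_A)$ transitions as the embedded chain of $(Z_t)$, I would couple the two processes to share this birth-death skeleton, and let $J$ be the common number of births and deaths before extinction.

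Given $\ep > 0$, choose $n_\ep$ from Lemma \ref{lem:birth-death} so that $P(J \geq 2 n_\ep + 1 \mid Z_0 = 1) < \ep$. On the event $\{J < 2 n_\ep + 1 \}$, the skeleton starts at $1$, returns to $0$ in fewer than $2 n_\ep + 1$ steps, and therefore never exceeds $n_\ep + 1$. For any $k \leq n_\ep + 1$, the conditional probability that the next event is a collision rather than a birth or a death is
$$ \frac{\binom{k}{2}}{k (N - k) + \binom{k}{2}} \ \leq \ \frac{n_\ep}{2 (N - n_\ep - 1)}, $$
so a union bound over the at most $2 n_\ep + 1$ such opportunities gives
$$ P \,(\tau_C < \infty, \ J < 2 n_\ep + 1) \ \leq \ \frac{(2 n_\ep + 1) \,n_\ep}{2 (N - n_\ep - 1)} \ < \ \ep $$
for all $N$ sufficiently large. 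Combining this with the bound on $\{J \geq 2 n_\ep + 1 \}$ yields $P(\tau_C < \infty) < 2 \ep$.

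The main obstacle is justifying the embedded-chain coupling cleanly: the total event rates in the naming game (namely $k (N - k)$ for a birth or death and $\binom{k}{2}$ for a collision) differ from the rate $k N$ of $(Z_t)$, but the relative propensity of a birth to a death is $q_A : (1 - q_A)$ in both processes, and this is exactly what is needed to identify the two embedded chains. Once this identification is made, the rest is a routine counting estimate.
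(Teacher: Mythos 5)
Your proposal is correct and follows essentially the same route as the paper: split on whether the number of jumps $J$ of the coupled birth--death skeleton exceeds $2n_\ep+1$ (using Lemma \ref{lem:birth-death}), and on the complementary event bound the collision probability at each of the at most $2n_\ep+1$ jumps by a quantity of order $n_\ep/N$. Your explicit remark that only the embedded jump chain, not the total event rates, needs to match the birth--death process is a point the paper glosses over, but it is the same argument.
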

\begin{proof}
 To begin with, we observe that, before the time $\tau_C$ of the first collision, there is no monolingual individual of type $A$ in
 the population.
 In particular, using the expression of the transition probabilities in the second column of Table \ref{tab:coupling}, and introducing
 $$ \begin{array}{l}
     r (i, j) \ := \ \lim_{h \to 0} \ h^{-1} \ P \,(A_{t + h} = A_t + i \ \hbox{and} \ B_{t + h} = B_t + j \ | \ \mathcal F_t), \end{array} $$
 we obtain that, before the first collision,
\begin{equation}
\label{eq:rates}
 \begin{array}{rclrcl}
   r (0, -1) & = & q_A \ e_t (B, AB)       & \hspace*{25pt} r (+2, 0) & = & p_{A \to B} \ e_t (AB, AB) \vspace*{2pt} \\
   r (0, +1) & = & (1 - q_A) \ e_t (B, AB) & \hspace*{25pt} r (0, +2) & = & p_{B \to A} \ e_t (AB, AB) \end{array}
\end{equation}
 whereas $r (i, j) = 0$ for all other values of $i$ and $j$.
 This implies that, before the first collision, the number of bilingual individuals has evolved according to the birth and death
 process in which individuals independently give birth at rate $N q_A$ and die at rate $N (1 - q_A)$.
 In particular, the naming game can be coupled with the birth and death process in such a way that
 $$ P \,(A_t = 0 \ \hbox{and} \ (AB)_t = Z_t \ | \ \tau_C > t) \ = \ 1 $$
 where $(AB)_t$ denotes the number of bilingual individuals at time $t$.
 The rest of the proof relies on the fact that the probability that the number of jumps in the birth and death process is large and
 the probability that there is a collision in the naming game coupled with the birth and death process when the number of jumps
 is small are both small when the graph is large.
 Indeed, Lemma~\ref{lem:birth-death} gives the existence of $n_{\ep}$ fixed from now on such that
\begin{equation}
\label{eq:collision-1}
 \sum_{n \geq n_{\ep}} \ P \,(\tau_C < \infty \,| \,J = 2n + 1) \ P \,(J = 2n + 1) \ \leq \ P \,(J \geq 2 n_{\ep} + 1) \ < \ \ep.
\end{equation}
 Moreover, when $J = 2n + 1$, the maximum number of individuals cannot exceed $n + 1$ in the birth and death process therefore, thinking
 again of the number of bilingual individuals as being coupled with the birth and death process before the first collision, at each jump,
 the probability of a collision is bounded by $N^{-1} (n + 1)$.
 The integer $n_{\ep}$ being fixed, this implies that
\begin{equation}
\label{eq:collision-2}
 \begin{array}{l}
  \displaystyle \sum_{n < n_{\ep}} \ P \,(\tau_C < \infty \,| \,J = 2n + 1) \ P \,(J = 2n + 1) \vspace*{-4pt} \\ \hspace*{40pt} \ \leq \
  \displaystyle \sum_{n < n_{\ep}} \ P \,(\tau_C < \infty \,| \,J = 2n + 1) \ \leq \
  \displaystyle \sum_{n < n_{\ep}} \ N^{-1} \,(2n + 1)(n + 1) \ < \ \ep \end{array}
\end{equation}
 for all $N$ sufficiently large.
 The lemma simply follows by observing that the probability to be estimated is bounded by the sum of the probabilities
 in \eqref{eq:collision-1} and \eqref{eq:collision-2}.
\end{proof} \\ \\
 Theorem \ref{th:complete-graphs} directly follows from the next lemma.
\begin{lemma} --
\label{lem:extinction}
 Fix $\phi < 3$ and $\ep > 0$. Then,
 $$ P \,(\eta_t \equiv A \ \hbox{for some} \ t \ | \,A_0 = 0 \ \hbox{and} \ (AB)_0 = 1) < 2 \ep \quad \hbox{for all $N$ large}. $$
\end{lemma}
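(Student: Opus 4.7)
The plan is to reduce the lemma to Lemma \ref{lem:collision} by establishing the inclusion
\[
  \{\eta_t \equiv A \text{ for some } t \} \ \subseteq \ \{\tau_C < \infty \}.
\]
Once this containment is in hand, the desired $2 \ep$ bound is inherited directly from the collision estimate.

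To establish the inclusion, I would introduce $\tau_A := \inf \{t : A_t \geq 1 \}$, the first time a monolingual type $A$ individual appears, and argue that $\tau_C \leq \tau_A$. Starting from $A_0 = 0$ and $(AB)_0 = 1$, every vertex is in state $B$ or $AB$ at every time $t < \tau_A$. Inspecting the transitions in Table \ref{tab:coupling} restricted to pairs of states in $\{B, AB \}$, one sees that $(B, B)$ only yields $(B, B)$, that $(B, AB)$ only yields $(B, B)$ or $(AB, AB)$ (transitions 5A and 5B), and that $(AB, AB)$ is the unique configuration capable of producing a monolingual $A$, via transition 4A. Consequently, the interaction occurring at time $\tau_A$ must take place along an edge whose two endpoints are both in state $AB$ just before $\tau_A$, which is exactly the definition of a collision. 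Hence $\tau_C \leq \tau_A$, and if $\tau_A < \infty$ then $\tau_C < \infty$.

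Combining this with Lemma \ref{lem:collision} gives
\[
  P \,(\eta_t \equiv A \text{ for some } t \,| \,A_0 = 0 \text{ and } (AB)_0 = 1) \ \leq \ P \,(\tau_C < \infty) \ < \ 2 \ep
\]
for all $N$ sufficiently large, which is exactly the statement of the lemma. There is no genuine obstacle in this step: the substantive analysis, namely the coupling of the number of bilingual individuals with a subcritical birth and death process and the ensuing control of the collision probability, was already carried out in Lemmas \ref{lem:birth-death} and \ref{lem:collision}. The only remaining ingredient is the structural observation above that a monolingual $A$ cannot appear out of $B$'s and $AB$'s without two bilingual individuals first interacting with one another.
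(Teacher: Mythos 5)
Your proposal is correct and follows essentially the same route as the paper: both reduce the lemma to Lemma \ref{lem:collision} via the observation that, starting from a configuration with no monolingual $A$, a type $A$ individual can only be created by an interaction between two bilingual individuals, so the event $\{\eta_t \equiv A \ \hbox{for some} \ t\}$ is contained in $\{\tau_C < \infty\}$. Your explicit enumeration of the transitions among states in $\{B, AB\}$ just spells out what the paper asserts in one line (``there is no type $A$ individual before the first collision'').
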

\begin{proof}
 Since there is no type $A$ individual before the first collision,
 $$ \begin{array}{l}
     P \,(\eta_t \equiv A \ \hbox{for some} \ t \ | \,A_0 = 0 \ \hbox{and} \ (AB)_0 = 1) \vspace*{4pt} \\ \hspace*{50pt} \leq \
     P \,(\eta_t (x) = A \ \hbox{for some} \ (x, t) \in V \times \R_+ \,| \,A_0 = 0 \ \hbox{and} \ (AB)_0 = 1) \vspace*{4pt} \\ \hspace*{50pt} \leq \
     P \,(\tau_C < \infty \,| \,A_0 = 0 \ \hbox{and} \ (AB)_0 = 1) \ < \ 2 \ep \end{array} $$
 for all $N$ sufficiently large according to Lemma \ref{lem:collision}.
\end{proof}

%%%%%%%%%%%%%%%%%%%%%%%%%%%%%%%%%%%%%%%%%%%%%%%%%%%%%%%%%%%%%%%%%%%%%%%%%%%%%%%%%%%%%%%%%%%%%%%%%%%%%%%%%%%%%%%%%%%%%%%%%%%%%%%%%%%%%%%%%%

\section{The naming game in one dimension}
\label{sec:1D}

\indent This section is devoted to the proof of Theorem \ref{th:1D}.
 The first and main step of the proof is to show almost sure invasion of word $A$ for the naming game $(\zeta_t)$ starting with
\begin{equation}
\label{eq:initial}
 \zeta_0 (x) = A \ \ \hbox{for all} \ x \leq 0 \quad \hbox{and} \quad \zeta_0 (x) = B \ \ \hbox{for all} \ x > 0.
\end{equation}
 The main difficulty to prove this result is that, even in the presence of nearest neighbor interactions, the evolution rules
 in \eqref{eq:naming-game} can create infinitely many interfaces, i.e., the state space of the process seen from the rightmost
 type $A$ individual that has only type $A$ to her left is infinite.
 Motivated by numerical simulations of the process that suggest that the size of the interface is somewhat small most of the
 time, we prove the result for the process $(\xi_t)$ that has
\begin{equation}
\label{eq:restriction}
 \xi_t (X_t + j) = B \ \ \hbox{for all} \ j \geq 3
\end{equation}
 where
 $$ X_t := \max \,\{x \in \Z : \xi_t (y) = A \ \hbox{for all} \ y \leq x \} $$
 but otherwise evolves according to the evolution rules \eqref{eq:naming-game}.
 That is, the process starts from the configuration described in \eqref{eq:initial} and evolves according to the evolution
 rules of the naming game except that, each time the configuration violates condition \eqref{eq:restriction}, the state at
 vertex $X_t + 3$ instantaneously flips to a type $B$.
 In view of this new rule, Lemma \ref{lem:attractive} implies that
 $$ P \,(\xi_t (x) = A) \leq P \,(\zeta_t (x) = A) \quad \hbox{and} \quad P \,(\xi_t (x) = B) \geq P \,(\zeta_t (x) = B) $$
 for all $x \in \Z$ and $t > 0$, from which it follows that
 $$ \lim_{t \to \infty} P \,(\zeta_t (x) = A) = 1 \ \ \hbox{for all} \ x \in \Z $$
 whenever
\begin{equation}
\label{eq:coupling}
 \lim_{t \to \infty} X_t = \infty \ \ \hbox{almost surely}.
\end{equation}
 Moreover, one easily checks that the modified process $(\xi_t)$ only has three possible interfaces corresponding to the following
 three types of configurations:
 $$ \begin{array}{rl}
    \hbox{(type 0)} & \xi_t (X_t + j) = B \ \ \hbox{for all} \ j \geq 1 \vspace*{2pt} \\
    \hbox{(type 1)} & \xi_t (X_t + 1) = AB \ \ \hbox{and} \ \ \xi_t (X_t + j) = B \ \ \hbox{for all} \ j \geq 2 \vspace*{2pt} \\
    \hbox{(type 2)} & \xi_t (X_t + 1) = \xi_t (X_t + 2) = AB \ \ \hbox{and} \ \ \xi_t (X_t + j) = B \ \ \hbox{for all} \ j \geq 3. \end{array} $$
 Indeed, only the transitions $0 \to 1$ and $1 \to 0$ and $1 \to 2$ for the configuration types are allowed starting from a
 type 0 or a type 1 configuration.
 Moreover, from a type 2 configuration, either a monolingual and a bilingual individuals interact, which results in a type 1 configuration
 or a configuration with three bilingual individuals which instantaneously flips to a type 2 configuration, or  both bilingual individuals
 interact, which results in a type 0 configuration.
 The main reason for introducing this modified process is its mathematical tractability due to the small size of the state space of the
 process seen from the interface.
 As previously mentioned, this is further motivated by the fact that numerical simulations suggest that the naming game itself, when
 starting from configuration \eqref{eq:initial}, is most of the time in type 0, 1 or 2 configurations, so the analysis of the modified
 process allows to obtain a bound $c$ somewhat close to one.
 To establish Theorem \ref{th:1D}, we now prove that, under the conditions of the theorem, \eqref{eq:coupling} holds.
 This is done by first computing the occupation time of the modified process in each configuration type and then computing the value
 of the drift for the process $(X_t)$ in each configuration type.
 To shorten the notations as in the proof of Lemma \ref{lem:supermartingale}, we will again use the probabilities $q_A$ and $q_B$ defined
 in \eqref{eq:short}.

\begin{figure}[t]
\centering
\scalebox{0.45}{\input{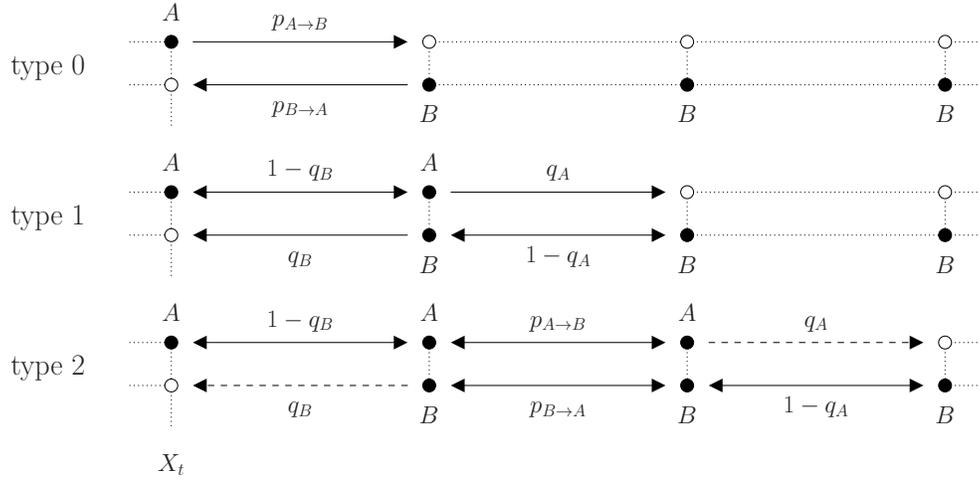}}
\caption{\upshape{Picture of the configuration types and all the possible transitions along with their rates.
 Each configuration is represented by two copies of the lattice, with the upper layer having a black particle to indicate that the individual
 uses word $A$ and a white particle when she does not, and similarly for word $B$ at the lower layer.
 Arrows indicate transitions where the individual at the tail speaks to the individual at the tip, while double arrows indicate transitions
 where any of the two neighbors speaks to the other one.
 The two dashed arrows in type 2 configurations correspond to the two transitions that are instantaneously followed by the event that the
 rightmost bilingual individual spontaneously becomes a type $B$ monolingual individual.}}
\label{fig:drift}
\end{figure}

\begin{lemma} --
\label{lem:config}
 The limits \ $\pi_j := \lim_{t \to \infty} \,P \,(\xi_t \ \hbox{is of type} \ j)$ exist and satisfy
\begin{equation}
\label{eq:config-1}
 \pi_0 \ = \ 2 \,\pi_1 + (r - 2) \,\pi_2 \quad \hbox{and} \quad r \,\pi_1 \ = \ (3 - r) \,\pi_2 \qquad \hbox{where} \quad r := q_A + q_B.
\end{equation}
\end{lemma}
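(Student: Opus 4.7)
The plan is to view the ``interface type'' $\Xi_t \in \{0,1,2\}$ of $\xi_t$ as a continuous-time Markov chain on the three-element state space $\{0,1,2\}$, independently of the position $X_t$, and then to recognize the claimed identities as its stationary balance equations. Existence of the limits $\pi_j$ will follow from the standard convergence theorem for finite irreducible continuous-time Markov chains.

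First, I would check that $(\Xi_t)$ is autonomous. Every edge outside the four-vertex block $\{X_t, X_t+1, X_t+2, X_t+3\}$ joins two vertices both in state $A$ or both in state $B$, so its firing cannot modify the type. Inside the block, each edge fires at rate one and its outcome probabilities depend only on the current type, not on $X_t$ or on the rest of the configuration. The instantaneous flip of $\xi_t(X_t+3)$ imposed by \eqref{eq:restriction} is also purely local, and it turns the two ``illegal'' three-$AB$ configurations that might arise out of type 2 into self-loops. Hence the infinitesimal generator of $(\Xi_t)$ depends only on the current value of $\Xi_t$.

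Next, I would read off the transition rates from the graphical construction, using the case summary in Figure~\ref{fig:drift}. With $r := q_A + q_B$ as defined, a case analysis yields
$$ q(0,1)=1, \qquad q(1,0)=2-r, \qquad q(1,2)=r, \qquad q(2,0)=1, \qquad q(2,1)=2-r, $$
all other off-diagonal rates being zero. For instance, from type 1 the edge $(A,AB)$ contributes rate $1-q_B$ to type 0 and rate $q_B$ to type 2, while the edge $(AB,B)$ contributes rate $q_A$ to type 2 and rate $1-q_A$ to type 0, summing to $q(1,0) = 2-r$ and $q(1,2) = r$. At type 2 the middle edge $(AB,AB)$ always produces a type 0 configuration (at total rate one, since $p_{A\to B}+p_{B\to A}=1$), while the two outer edges $(A,AB)$ and $(AB,B)$ contribute rates $1-q_B$ and $1-q_A$ to type 1 and self-loops of rates $q_B$ and $q_A$ after the prescribed flip of $\xi_t(X_t+3)$.

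The chain is irreducible on $\{0,1,2\}$ for any $\phi>0$, so the distribution of $\Xi_t$ converges to the unique stationary distribution $\pi$ regardless of the initial type. The balance equation for type 2 reads $(3-r)\pi_2 = r\pi_1$, which is the second identity of the lemma. The balance equation for type 1 reads $2\pi_1 = \pi_0 + (2-r)\pi_2$, which rearranges to $\pi_0 = 2\pi_1 + (r-2)\pi_2$, the first identity. The only delicate point is the rate bookkeeping at type 2, and in particular making sure that the instantaneous flip rule is correctly absorbed into the self-loops; this is exactly the reason the modified process $(\xi_t)$ was introduced, since it is what keeps the type process confined to three states and hence the computation finite.
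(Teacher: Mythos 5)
Your proof is correct and follows essentially the same route as the paper: identify the configuration type as an autonomous, irreducible three-state continuous-time Markov chain, read off the rates $q(0,1)=1$, $q(1,0)=q(2,1)=2-r$, $q(1,2)=r$, $q(2,0)=1$ from the local updates (with the instantaneous-flip events absorbed into self-loops, whose rates are irrelevant), and write the stationary balance equations at states $1$ and $2$. Your bookkeeping, including the sign in $\pi_0 = 2\pi_1 + (r-2)\pi_2$, agrees with the lemma as stated.
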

\begin{proof}
 Let $Y_t := j$ if the configuration at time $t$ is of type $j$.
 Looking at all the possible updates of the modified naming game and the corresponding transition rates in Figure \ref{fig:drift}, one
 easily checks that the configuration type evolves according to the Markov chain with transitions
\begin{equation}
\label{eq:config-2}
  \begin{array}{ccl}
   0 \ \to \ 1 & \hbox{at rate} & r_{01} \ = \ p_{A \to B} + p_{B \to A} \ = \ 1 \vspace*{2pt} \\
   1 \ \to \ 0 & \hbox{at rate} & r_{10} \ = \ (1 - q_B) + (1 - q_A) \ = \ 2 - (q_A + q_B)\vspace*{2pt} \\
   1 \ \to \ 2 & \hbox{at rate} & r_{12} \ = \ q_A + q_B \vspace*{2pt} \\
   2 \ \to \ 0 & \hbox{at rate} & r_{20} \ = \ p_{A \to B} + p_{B \to A} \ = \ 1 \vspace*{2pt} \\
   2 \ \to \ 1 & \hbox{at rate} & r_{21} \ = \ (1 - q_B) + (1 - q_A) \ = \ 2 - (q_A + q_B). \end{array}
\end{equation}
 Note that the rates on the two dashed arrows of Figure \ref{fig:drift} are irrelevant.
 These transition rates imply that $(Y_t)$ is irreducible therefore the limits
 $$ \pi_j \ := \ \lim_{t \to \infty} \,P \,(\hbox{configuration} \ \xi_t \ \hbox{is of type} \ j) \ = \ \lim_{t \to \infty} \,P \,(Y_t = j) \quad \hbox{for} \ j = 0, 1, 2 $$
 exist and satisfy the following two equations:
 $$ \pi_0 \ = \ (r_{10} + r_{12}) \,\pi_1 + r_{21} \,\pi_2 \quad \hbox{and} \quad r_{12} \,\pi_1 = (1 + r_{21}) \,\pi_2. $$
 Using also that $r_{12} = r$ and $r_{10} = r_{21} = 2 - r$ according to \eqref{eq:config-2} gives
 $$ \pi_0 \ = \ 2 \,\pi_1 + (2 - r) \,\pi_2 \quad \hbox{and} \quad r \,\pi_1 = (3 - r) \,\pi_2 $$
 which is precisely \eqref{eq:config-1}.
\end{proof} \\ \\
 To prove \eqref{eq:coupling}, the next step is to compute the value of the conditional drift of the process $(X_t)$ given the
 configuration type, i.e.,
 $$ \begin{array}{l} D_j \ := \ \lim_{\,h \to 0} \ h^{-1} \,E \,(X_{t + h} - X_t \,| \,Y_t = j) \quad \hbox{for} \ j = 0, 1, 2. \end{array} $$
 Looking again at all the possible updates, one easily finds
\begin{equation}
\label{eq:drift}
  \begin{array}{rcl}
   D_0 & = & - p_{B \to A} \vspace*{2pt} \\
   D_1 & = & (1 - q_B) - q_B \ = \ 1 - 2 \,q_B \vspace*{2pt} \\
   D_2 & = & (1 - q_B) - q_B + 2 \,p_{A \to B} \ = \ D_1 + 2 \,p_{A \to B}. \end{array}
\end{equation}
 The last step is to combine \eqref{eq:config-1} and \eqref{eq:drift} to prove that
\begin{equation}
\label{eq:global-drift}
 \pi_0 \,D_0 + \pi_1 D_1 + \pi_2 \,D_2 \ > \ 0 \quad \hbox{for all} \ \phi > c
\end{equation}
 from which the almost sure convergence of the interface to infinity follows.
 The most natural approach is to express $\pi_j$ and $D_j$ for $j = 0, 1, 2$ as a function of $\phi$, from which it
 can be deduced that the first inequality in \eqref{eq:global-drift} holds for $\phi$ larger than the largest real root of a certain polynomial
 with degree six.
 This root is not obvious to compute.
 Instead, we observe that, when both fitnesses are close to each other, $\phi$ is close to one and the rate $r$ close to $1/2$.
 The next two lemmas show that the left-hand side of \eqref{eq:global-drift} is larger than its counterpart obtained by computing
 $\pi_j$ under the assumption $r = 1/2$, which allows to express $c$ more simply as the largest root of a polynomial with degree two.
 Interestingly, a series of evaluations of the polynomial with degree six around $c$ indicates that the largest real root of this
 polynomial only differ from $c$ by less than $10^{-6}$, which shows \emph{a posteriori} the advantage of our approach.

\begin{lemma} --
\label{lem:rate}
 For all $\phi_A, \phi_B > 0$, we have $1/2 \leq r \leq 1$.
\end{lemma}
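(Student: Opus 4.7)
The plan is to compute $r$ explicitly as a rational function of $\phi = \phi_A/\phi_B$ and then reduce both inequalities to low-degree polynomial inequalities. First I would unpack the definitions: since $\phi_{AB} = (\phi_A+\phi_B)/2$, the transition probabilities out of a bilingual individual are
$$ p_{AB \to A} = \frac{\phi_A + \phi_B}{3\phi_A + \phi_B}, \qquad p_{AB \to B} = \frac{\phi_A + \phi_B}{\phi_A + 3\phi_B}. $$
Multiplying by $p_{A \to B}$ and $p_{B \to A}$ respectively and dividing numerator and denominator by $\phi_B$ collapses to the clean expressions (already noted in the excerpt)
$$ q_A = \frac{\phi}{\phi + 3}, \qquad q_B = \frac{1}{3\phi + 1}, $$
so the whole question reduces to bounding the single-variable function $r(\phi) = \phi/(\phi+3) + 1/(3\phi+1)$ over $\phi > 0$.

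Next I would establish the upper bound $r \leq 1$ by clearing denominators: the inequality is equivalent to $\phi(3\phi+1) + (\phi + 3) \leq (\phi+3)(3\phi+1)$, which after expansion reduces to $0 \leq 8\phi$, trivially true (with strict inequality for all $\phi > 0$, so the upper bound is strict).

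For the lower bound $r \geq 1/2$, I would again clear denominators: it is equivalent to $2[\phi(3\phi+1) + (\phi + 3)] \geq (\phi+3)(3\phi+1)$, which simplifies to $3\phi^2 - 6\phi + 3 \geq 0$, i.e.\ $3(\phi - 1)^2 \geq 0$. This is manifestly nonnegative, with equality iff $\phi = 1$, so $r \geq 1/2$ with equality only in the symmetric case $\phi_A = \phi_B$.

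There is no real obstacle here; the only thing to be careful about is keeping the algebraic manipulations clean. The result also has a transparent interpretation: at $\phi = 1$ both ``$A$ chosen'' events have conditional probability $1/4$, giving $r = 1/2$, while as $\phi \to 0$ or $\phi \to \infty$ one of $q_A, q_B$ tends to $1$ and the other to $0$, so $r \to 1$ in both limits, consistent with the sharp bounds above.
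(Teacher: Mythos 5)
Your proof is correct, and it follows the same overall reduction as the paper: both arguments collapse $r = q_A + q_B$ to the single-variable function $\phi \mapsto \phi(\phi+3)^{-1} + (3\phi+1)^{-1}$ of the ratio $\phi = \phi_A/\phi_B$. The only difference is the finishing step. The paper observes that this function $h$ satisfies $h(\phi) = h(\phi^{-1})$ and then invokes a monotonicity argument via differentiation to conclude $h(1) = 1/2 \leq r \leq 1 = \lim_{\phi\to\infty} h(\phi)$; the details of the derivative computation are left to the reader. You instead clear denominators and reduce the two bounds to the explicit polynomial inequalities $8\phi \geq 0$ and $3(\phi-1)^2 \geq 0$, which is entirely algebraic, makes the equality case $\phi = 1$ for the lower bound and the strictness of the upper bound immediate, and is if anything more self-contained than the paper's version. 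Both routes are valid; yours trades the symmetry observation for a completely explicit verification.
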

\begin{proof}
 Recalling \eqref{eq:config-1} and using $\phi_A + \phi_B = 2 \,\phi_{AB}$, we get
 $$ \begin{array}{rcl}
     r & := & q_A + q_B \vspace*{4pt} \\
       &  = & p_{AB \to A} \ p_{B \to A} + p_{AB \to B} \ p_{A \to B} \vspace*{4pt} \\
       &  = & \phi_B \,(2 \,\phi_A + 2 \,\phi_{AB})^{-1} + \phi_A \,(2 \,\phi_B + 2 \,\phi_{AB})^{-1} \vspace*{4pt} \\
       &  = & \phi_B \,(3 \,\phi_A + \phi_B)^{-1} + \phi_A \,(3 \,\phi_B + \phi_A)^{-1}
       \  = \ (3 \phi + 1)^{-1} + (3 \phi^{-1} + 1)^{-1} \ =: \ h (\phi). \end{array} $$
 Noticing that $h (\phi) = h (\phi^{-1})$ and differentiating with respect to $\phi$, we deduce that
 $$ h (1) \ = \ 1/2 \ \leq \ r \ \leq \ 1 \ = \ \lim_{\phi \to \infty} h (\phi), $$
 which completes the proof.
\end{proof}

\begin{lemma} --
\label{lem:sign-drift}
 For all $\phi_A, \phi_B > 0$, we have
\begin{equation}
\label{eq:sign-drift-1}
 \sgn (\pi_0 \,D_0 + \pi_1 D_1 + \pi_2 \,D_2) \ \geq \ \sgn (17 \,D_0 + 10 \,D_1 + 2 \,D_2).
\end{equation}
\end{lemma}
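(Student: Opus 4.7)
The plan is to first derive closed-form expressions for the stationary probabilities $\pi_0, \pi_1, \pi_2$ using Lemma \ref{lem:config}, then observe that the coefficients $17, 10, 2$ appearing in \eqref{eq:sign-drift-1} are nothing but the values of $29\,\pi_j$ evaluated at $r = 1/2$, and finally verify the sign inequality by a Taylor-like expansion of $\pi_0 D_0 + \pi_1 D_1 + \pi_2 D_2$ around $r = 1/2$, which Lemma \ref{lem:rate} identifies as the lower endpoint of the admissible range of $r$.

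First, I would combine the two linear relations of Lemma \ref{lem:config} with the normalization $\pi_0 + \pi_1 + \pi_2 = 1$. Routine elimination yields
\[ \pi_0 = \frac{r^2 - 4r + 6}{r^2 - 4r + 9}, \qquad \pi_1 = \frac{3 - r}{r^2 - 4r + 9}, \qquad \pi_2 = \frac{r}{r^2 - 4r + 9}, \]
which at $r = 1/2$ reduce to $(17/29,\, 10/29,\, 2/29)$, and this is exactly what motivates the coefficients in the statement. Since the common denominator $r^2 - 4r + 9$ has negative discriminant and is therefore strictly positive, multiplying through preserves the sign, so it suffices to compare
\[ F(r) := (r^2 - 4r + 6)\,D_0 + (3 - r)\,D_1 + r\,D_2 \]
with $4\,F(1/2) = 17\,D_0 + 10\,D_1 + 2\,D_2$.

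Second, I would write $r = 1/2 + s$ with $s \in [0, 1/2]$, which is the range furnished by Lemma \ref{lem:rate}, and expand. Using the identities $D_2 - D_1 = 2\,p_{A \to B}$, $D_0 = -p_{B \to A}$, and $p_{A \to B} = \phi \, p_{B \to A}$, a short computation gives
\[ F(r) - F(1/2) \ = \ s\,(D_2 - D_1 - 3\,D_0) + s^2\,D_0 \ = \ s \, p_{B \to A} \, \bigl[(2\phi + 3) - s\bigr], \]
which is nonnegative because $s \leq 1/2 < 2\phi + 3$ and $p_{B \to A} > 0$. Therefore $F(r) \geq F(1/2)$ on the whole admissible interval, and hence $\sgn(F(r)) \geq \sgn(F(1/2)) = \sgn(17\,D_0 + 10\,D_1 + 2\,D_2)$, which is the desired inequality.

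The main obstacle is conceptual rather than technical: one must recognise that the prescribed coefficients $17, 10, 2$ encode the values of $29\,\pi_j$ at the extreme point $r = 1/2$, so that expanding $F$ around that point collapses the problem into an elementary sign check with a single cross-term and a single quadratic remainder. A direct attack substituting the expressions for $\pi_j$ and $D_j$ as rational functions of $\phi$ would instead require the analysis of a degree-six polynomial in $\phi$, which is precisely the complication that the two-lemma strategy around $r = 1/2$ is designed to bypass.
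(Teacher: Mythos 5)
Your proof is correct and follows essentially the same route as the paper: your $F(r)$ is identical to the function $D(r)$ that the paper obtains by eliminating $\pi_0$ and $\pi_1$ through the balance equations of Lemma \ref{lem:config}, and both arguments reduce to showing that this quadratic is minimized at $r = 1/2$ over the interval $[1/2, 1]$ supplied by Lemma \ref{lem:rate}. The only cosmetic difference is that you compute the increment $F(r) - F(1/2)$ exactly, whereas the paper bounds the derivative $D'(r) \geq 2 > 0$.
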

\begin{proof}
 Using the relationship among $\pi_0, \pi_1$ and $\pi_2$ given in \eqref{eq:config-1}, we obtain
 $$ \begin{array}{rcl}
    \sgn (\pi_0 \,D_0 + \pi_1 D_1 + \pi_2 \,D_2) & = &
    \sgn ((2 \,\pi_1 + (r - 2) \,\pi_2) \,D_0 + \pi_1 \,D_1 + \pi_2 \,D_2) \vspace*{4pt} \\ & = &
    \sgn ((2 D_0 + D_1) \,\pi_1 + ((r - 2) \,D_0 + D_2) \,\pi_2) \vspace*{4pt} \\ & = &
    \sgn ((2 D_0 + D_1) (3 - r) + ((r - 2) \,D_0 + D_2) \,r). \end{array} $$
 To find a lower bound for the sign above, we introduce the function
 $$ D (r) \ := \ (2 D_0 + D_1) (3 - r) + ((r - 2) \,D_0 + D_2) \,r $$
 and observe that, for all $r \leq 1$,
 $$ \begin{array}{rcl}
     D' (r) & = & - (2 D_0 + D_1) + ((r - 2) \,D_0 + D_2) + D_0 \,r \vspace*{4pt} \\
            & = & 2 \,(r - 2) \,D_0 - D_1 + D_2
            \ = \ - 2 \,(r - 2)(1 - p_{A \to B}) + 2 \,p_{A \to B} \vspace*{4pt} \\
            & = & - 2 \,(r - 2) + 2 \,(r - 1) \,p_{A \to B}
            \ \geq \ - 2 \,(r - 2) + 2 \,(r - 1) \ = \ 2 \ > \ 0. \end{array} $$
 Using in addition that $1/2 \leq r \leq 1$ according to Lemma \ref{lem:rate} gives
 $$ \begin{array}{l}
    \sgn (\pi_0 \,D_0 + \pi_1 D_1 + \pi_2 \,D_2) \ \geq \ \sgn (D (1/2)) \vspace*{4pt} \\ \hspace*{40pt} = \
    \sgn ((2 D_0 + D_1) (3 - 1/2) + ((1/2 - 2) \,D_0 + D_2) \,(1/2)) \vspace*{4pt} \\ \hspace*{40pt} = \
    \sgn (17 \,D_0 + 10 \,D_1 + 2 \,D_2). \end{array} $$
 This completes the proof.
\end{proof}

\begin{lemma} --
\label{lem:positive-drift}
 The right-hand side of \eqref{eq:sign-drift-1} is positive whenever
\begin{equation}
\label{eq:positive-drift-1}
 \phi \ > \ c \quad \hbox{where} \quad c \ := \ \frac{23 + \sqrt{6097}}{96} \ \approx \ 1.053.
\end{equation}
\end{lemma}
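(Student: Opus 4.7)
The plan is a direct computation reducing the expression $17 D_0 + 10 D_1 + 2 D_2$ to a single rational function in $\phi$ and then analyzing the sign of its numerator, which will turn out to be exactly the quadratic $48 \phi^2 - 23 \phi - 29$ alluded to in the statement of Theorem~\ref{th:1D}.

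First, I would write the three quantities $D_0, D_1, D_2$ explicitly as functions of $\phi = \phi_A/\phi_B$. Using $p_{B\to A} = (\phi+1)^{-1}$, $p_{A\to B} = \phi(\phi+1)^{-1}$, and the identity $q_B = p_{AB\to A} \, p_{B\to A} = (3\phi+1)^{-1}$ (which follows from $\phi_{AB} = (\phi_A+\phi_B)/2$ exactly as in the verification of \eqref{eq:martingale-1}), the expressions in \eqref{eq:drift} become
\begin{equation*}
 D_0 \ = \ - \frac{1}{\phi+1}, \qquad
 D_1 \ = \ \frac{3\phi - 1}{3\phi+1}, \qquad
 D_2 \ = \ D_1 + \frac{2\phi}{\phi+1}.
\end{equation*}

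Next, I would plug these into $17 D_0 + 10 D_1 + 2 D_2$, group the terms with denominator $(\phi+1)$ on one side and those with denominator $(3\phi+1)$ on the other, and put everything over the common denominator $(\phi+1)(3\phi+1)$. A short expansion shows
\begin{equation*}
 17 D_0 + 10 D_1 + 2 D_2 \ = \ \frac{(4\phi-17)(3\phi+1) + (36\phi-12)(\phi+1)}{(\phi+1)(3\phi+1)} \ = \ \frac{48\phi^2 - 23\phi - 29}{(\phi+1)(3\phi+1)}.
\end{equation*}
Because $\phi > 0$, the denominator is strictly positive, so the sign of the expression coincides with the sign of the quadratic $48\phi^2 - 23\phi - 29$.

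Finally, I would solve the quadratic: its discriminant equals $23^2 + 4 \cdot 48 \cdot 29 = 529 + 5568 = 6097$, so its roots are $(23 \pm \sqrt{6097})/96$. Since the leading coefficient $48$ is positive, $48 \phi^2 - 23 \phi - 29 > 0$ for all $\phi$ strictly larger than the larger root $c$, which is exactly the value in \eqref{eq:positive-drift-1}. This proves the lemma. The argument is essentially bookkeeping, and the only place where a mistake could creep in is the algebraic expansion of the numerator; since both intermediate polynomials $(4\phi-17)(3\phi+1)$ and $(36\phi-12)(\phi+1)$ are short, I do not expect any genuine obstacle.
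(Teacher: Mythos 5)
Your computation is correct and follows essentially the same route as the paper: both express $D_0, D_1, D_2$ as rational functions of $\phi$, clear the common denominator $(\phi+1)(3\phi+1)$ (the paper via the auxiliary quantities $F_j$, you via a single fraction), and reduce the sign question to the quadratic $48\phi^2 - 23\phi - 29$, whose larger root is $c$. All the intermediate identities and the discriminant $6097$ check out.
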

\begin{proof}
 First of all, note that
 $$ p_{B \to A} \ = \ (\phi + 1)^{-1} \qquad q_A \ = \ \phi \,(\phi + 3)^{-1} \qquad q_B \ = \ (3 \phi + 1)^{-1}. $$
 Using in addition \eqref{eq:drift} gives
 $$ \begin{array}{rcl}
     F_0 & := & (\phi + 1)(3 \phi + 1) \,D_0 \ = \ - (3 \phi + 1) \vspace*{2pt} \\
     F_1 & := & (\phi + 1)(3 \phi + 1) \,D_1 \ = \   (3 \phi - 1)(\phi + 1)\vspace*{2pt} \\
     F_2 & := & (\phi + 1)(3 \phi + 1) \,D_2 \ = \   (3 \phi - 1)(\phi + 1) + 2 \phi \,(3 \phi + 1). \end{array} $$
 Since $(\phi + 1)(3 \phi + 1) > 0$, we deduce that
 $$ \begin{array}{l}
    \sgn (17 \,D_0 + 10 \,D_1 + 2 \,D_2) \ = \
    \sgn (17 \,F_0 + 10 \,F_1 + 2 \,F_2) \vspace*{4pt} \\ \hspace*{40pt} = \
    \sgn (- 17 \,(3 \phi + 1) + 12 \,(3 \phi - 1)(\phi + 1) + 4 \phi \,(3 \phi + 1)) \vspace*{4pt} \\ \hspace*{40pt} = \
    \sgn (48 \phi^2 - 23 \phi - 29) \end{array} $$
 which is positive whenever $\phi > c$ as defined in \eqref{eq:positive-drift-1}.
\end{proof} \\ \\
 From Lemma \ref{lem:positive-drift}, it directly follows that the process $(X_t)$ converges almost surely to infinity,
 which also implies convergence of the naming game starting from configuration \eqref{eq:initial} to the configuration
 in which all individuals are type $A$ monolingual.
 Moreover, we have
 $$ P \,(X_t \geq 0 \ \hbox{for all} \ t) \ > \ 0. $$
 To deduce that word $A$ can invade word $B$, we let $(X_t^+)$ and $(X_t^-)$ be two independent copies of the process $(X_t)$ and use
 a standard coupling argument to conclude that, under the assumptions of the theorem, the probability that the naming game starting
 with the origin in state $A$ and all the other vertices in state $B$ converges to the ``all $A$'' configuration is given by
 $$ \begin{array}{rcl}
     P \,(X_t^+ \geq - X_t^- \ \hbox{for all} \ t) & \geq &
     P \,(X_t^+ \geq 0 \ \hbox{and} \ - X_t^- \leq 0 \ \hbox{for all} \ t) \vspace*{4pt} \\ & \geq &
     P \,(X_t^+ \geq 0 \ \hbox{for all} \ t) \ P \,(- X_t^- \leq 0 \ \hbox{for all} \ t) \vspace*{4pt} \\ & \geq &
     P \,(X_t \geq 0 \ \hbox{for all} \ t) \ P \,(X_t \geq 0 \ \hbox{for all} \ t) \ > \ 0. \end{array} $$
 Since there is a positive probability for the process starting with a single bilingual individual at the origin that the origin
 is of type $A$ at time one, this completes the proof of Theorem \ref{th:1D}.

%%%%%%%%%%%%%%%%%%%%%%%%%%%%%%%%%%%%%%%%%%%%%%%%%%%%%%%%%%%%%%%%%%%%%%%%%%%%%%%%%%%%%%%%%%%%%%%%%%%%%%%%%%%%%%%%%%%%%%%%%%%%%%%%%%%%%%%%%%

\section{The naming game in higher dimensions}
\label{sec:2D}

\indent This section is devoted to proving Theorem \ref{th:lattice}, which relies on a block construction.
 To spare the reader complicated notations, we only prove the result in $d = 2$ but our approach easily extends to higher
 dimensions.
 The idea of the block construction is to couple a certain collection of good events related to the process properly
 rescaled in space and time with the set of open sites of oriented site percolation on the oriented graph $\mathcal H_1$
 with vertex set
 $$ H \ := \ \{(z, n) \in \Z^2 \times \Z_+ : z_1 + z_2 + n \ \hbox{is even} \} $$
 and in which there is an oriented edge
 $$ (z, n) \to (z', n') \quad \hbox{if and only if} \quad z' = z + (\pm 1, \pm 1) \ \ \hbox{and} \ \ n' = n + 1. $$
 See the left-hand side of Figure \ref{fig:graphs} for a picture in $d = 1$.
 To rescale the process and define the collection of good events later in the proof of Lemma \ref{lem:block}, we let
 $T := \sqrt{\phi}$ and introduce the collection of space-time blocks
\begin{equation}
\label{eq:blocks}
  \begin{array}{l}
     B (z, n) \ := \ \{(x, t) = ((x_1, x_2), t) \in \Z^2 \times [0, \infty) \ \hbox{such that} \vspace*{4pt} \\ \hspace*{15pt}
                        x_j \in \{z_j, z_j + 1 \} \ \hbox{for} \ j = 1, 2 \ \hbox{and} \ t \in [2nT, 2 (n + 1) \,T) \} \ \ \hbox{for all} \ \ (z, n) \in H. \end{array}
\end{equation}
 In words, space is partitioned into $2 \times 2$ squares and time into intervals of length $2T$, while the collection
 of space-time blocks in \eqref{eq:blocks} defines a partition of the space-time universe.
 The key to proving invasion of word $A$ is to show that the set of sites
 $$ (z, n) \in H \quad \hbox{such that} \quad \eta_t (x) = A \ \ \hbox{for all} \ \ (x, t) \in B (z, n), $$
 that we call $A$-sites for short, dominates stochastically the set of wet sites in an oriented site percolation process whose
 parameter can be made arbitrarily close to one by choosing the parameter~$\phi$ sufficiently large.
 More precisely, we have the following lemma.

\begin{lemma} --
\label{lem:block}
 For all $\ep > 0$, there exists $\phi > 0$ such that the set of $A$-sites dominates the set of wet sites in a
 two dependent oriented site percolation process with parameter $1 - \ep$.
\end{lemma}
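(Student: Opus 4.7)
The plan is a block construction in the style of Bramson and Durrett: I want to exhibit, for each $(z, n) \in H$, a local ``good event'' $G(z, n)$ depending only on the graphical representation in a bounded space--time window, so that whenever $G(z, n)$ holds and $B(z, n)$ is entirely of type $A$ throughout its time interval, each of the four children $B(z \pm (1, \pm 1), n + 1)$ is an $A$-site as well. The time scale $T = \sqrt{\phi}$ is calibrated so that, in a bounded spatial window during the interval $[2nT, 2(n + 1)T)$, (a) the expected number of Poisson interactions is $\Theta(T) \to \infty$, giving $A$ enough time to propagate, while (b) the expected number of updates whose uniform variable falls in a ``bad'' window of size $O(1/\phi)$ is $O(T / \phi) = O(\phi^{-1/2}) \to 0$, so previously-$A$ vertices do not revert.

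The first step is to quantify which outcomes are ``good'' and which are ``bad'' as $\phi \to \infty$. Using $p_{A \to B} = \phi / (\phi + 1)$, $q_A = \phi / (\phi + 3)$, $q_B = 1 / (3\phi + 1)$, and $p_{A \to AB} + p_{AB \to A} \, p_{A \to B} = 3 \phi / (3 \phi + 1)$, each of the transitions labeled 2A--5A in Table \ref{tab:coupling} (those creating an $A$ or removing a $B$) has conditional probability tending to one, whereas the reverse transitions 2B--5B have probability $O(1/\phi)$. In particular, the composite sequence $(A, B) \to (A, AB) \to (A, A)$ succeeds with probability tending to one per attempt, while the disruptive updates that revert an $A$ to $AB$, such as $(A, AB) \to (AB, AB)$ at probability $q_B$, become rare.

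Next, by Lemma \ref{lem:attractive} the naming game is monotone in its initial configuration in the sense that more $A$'s and fewer $B$'s produce stochastically more $A$'s later, so it suffices to exhibit spreading when the configuration outside $B(z, n)$ is the worst possible, namely all-$B$. I would then fix a sufficiently large enclosing spatial window $W$ around $B(z, n)$ (taken large enough that if $W$ is all-$A$ at time $2(n + 1)T$ it contains the analogous enclosing windows of the four children) and define $G(z, n)$ as the conjunction of two events on the graphical data restricted to $W \times I$ with $I = [2nT, 2(n + 1)T)$: first, an explicit finite cascade of updates, each of type $(A, B) \to (A, AB)$ or $(A, AB) \to (A, A)$, that propagates $A$ outward from $B(z, n)$ and covers $W$ by the end of $I$; second, the absence during $I$ of any interaction on $W$ whose uniform variable falls in a bad window. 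The cascade involves finitely many updates, each with ``good'' probability tending to one, so its probability tends to one with $\phi$; the no-disruption clause occurs with probability $1 - O(T |W| / \phi) \to 1$. Hence $P(G(z, n)) \geq 1 - \epsilon$ for $\phi$ large. Because $G(z, n)$ is measurable with respect to the graphical variables in the bounded box $W \times I$, and two such boxes are disjoint whenever $(z, n)$ and $(z', n')$ are not close in both space and time, the family $\{G(z, n)\}$ is two-dependent, and a standard stochastic domination argument then yields the desired comparison with two-dependent oriented site percolation with parameter $1 - \epsilon$.

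The main obstacle is the lack of full monotonicity of the naming game: unlike in the contact or voter processes, an $A$ vertex can revert to $AB$ (and eventually to $B$) via interactions with bilingual neighbors, so spreading a region of $A$'s requires both that $A$ invade new vertices and that existing $A$'s persist. This is precisely why both clauses in $G(z, n)$ are needed, and why the choice $T = \sqrt{\phi}$ is the natural one: large enough for the good cascade to complete yet small enough to rule out all disruptive updates in the enclosing window. Lemma \ref{lem:attractive} is the key tool that allows me to reduce to the all-$B$ worst case and avoid tracking the (potentially favorable, but possibly hostile) configuration outside $B(z, n)$.
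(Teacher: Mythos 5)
Your proposal follows essentially the same route as the paper: the same rescaling $T = \sqrt{\phi}$, the same dichotomy between good interactions (probability tending to one, spreading $A$) and bad interactions (probability $O(1/\phi)$, the only ones that can harm an established $A$), a good event built from a finite cascade of good interactions together with a no-bad-interaction clause on an enclosing window, measurability with respect to the graphical representation in a bounded space--time box, and comparison with two-dependent oriented site percolation. The one stylistic difference is that you invoke Lemma \ref{lem:attractive} to reduce to an all-$B$ exterior, whereas the paper avoids this reduction by declaring an interaction good when $U_n (x, y) < q_A$, the smallest of the thresholds appearing in Table \ref{tab:coupling}; by \eqref{eq:attractive-1}--\eqref{eq:attractive-2} such an interaction produces a transition of type 2A--5A whatever the states of the two neighbors are, so the inductive implication holds with no reference to the configuration outside the block. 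Your reduction also works, but note that the statement of Lemma \ref{lem:attractive} only compares one-dimensional marginals, so you need the pathwise ordering established inside its proof rather than the lemma as stated.

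There is one concrete gap. Your good event is measurable with respect to the graphical data in $W \times I$ with $I = [2nT, 2(n+1)T)$ only, so it can at best certify that the children's squares are entirely of type $A$ at the single time $2(n+1)T$. But an $A$-site $(z', n+1)$ is by definition a block on which the configuration is all-$A$ \emph{throughout} $[2(n+1)T, 2(n+2)T)$, and nothing in your $G(z, n)$ prevents a bad interaction occurring just after time $2(n+1)T$ from reverting one of these vertices; the children's own good events play no role in the implication (parent is an $A$-site and $G(z,n)$ occurs) $\Rightarrow$ (each child is an $A$-site), which is what the comparison theorem requires. As written, that implication fails. The paper's condition 4 repairs exactly this by forbidding bad interactions on the relevant edges all the way up to time $(2n+4)T$, i.e., through the end of the children's time interval -- which is also why the resulting percolation process is two dependent rather than one dependent. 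Alternatively you could redefine an $A$-site as a single-time event; either fix is routine, but one of them is needed.
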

\begin{proof}
 We say that the interaction along edge $(x, y)$ at time $T_n (x, y)$ is
 $$ \hbox{a good interaction if} \quad U_n (x, y) \ < \ q_A \ = \ \phi \,(\phi + 3)^{-1} $$
 and a bad interaction otherwise.
 Referring to Figure \ref{fig:block}, we let $G (z, n)$ be the event that
\begin{enumerate}
 \item between time $2nT$ and time $(2n + 1) \,T$, there are at least two good interactions along each of the eight edges labeled 1
  on the left-hand side, \vspace*{4pt}
 \item between time $2nT$ and time $(2n + 1) \,T$, there is no bad interaction along any of the sixteen edges labeled 2 on the left-hand
  side, \vspace*{4pt}
 \item between time $(2n + 1) \,T$ and time $2 (n + 1) \,T$, there is at least one good and no bad interaction along each
  of the eight edges labeled 3 on the right-hand side, \vspace*{4pt}
 \item between time $(2n + 1) \,T$ and time $(2n + 4) \,T$, there is no bad interaction along any of the sixteen edges labeled 4 on
  the right-hand side.
\end{enumerate}
\begin{figure}[t]
\centering
\scalebox{0.40}{\input{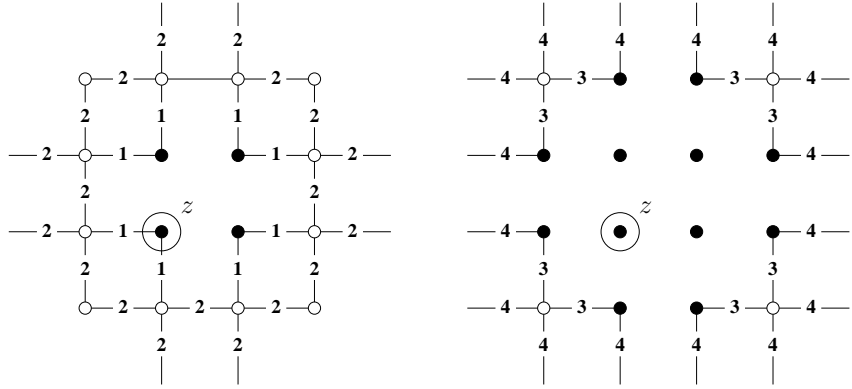}}
\caption{\upshape{Picture of the block construction.}}
\label{fig:block}
\end{figure}
 From \eqref{eq:attractive-1}--\eqref{eq:attractive-2} and the probabilities in Table \ref{tab:coupling}, it follows that an interaction
 involving at least one individual using word $A$ can only result in one of the transitions 1A--5A in the table.
 In particular, whenever site $(z, n)$ is an $A$-site and our good event 1--4 occurs, the following holds:
\begin{itemize}
 \item at time $(2n + 1) \,T$, all twelve vertices marked with a black dot $\bullet$ on the right-hand side of the figure are of
  type $A$ and \vspace*{4pt}
 \item between $(2n + 2) \,T$ and $(2n + 4) \,T$, all sixteen vertices in the figure are of type $A$.
\end{itemize}
 In particular, letting $\Omega (z, n)$ be the event that $(z, n)$ is an $A$-site, we deduce that
\begin{equation}
\label{eq:block-1}
  \Omega (z, n) \,\cap \,G (z, n) \ \subset \ \Omega ((z_1 \pm 1, z_2 \pm 1), n + 1).
\end{equation}
 Now, let $X$ and $Y$ be the number of good and bad interactions that occur along one given edge in a given time interval of length $T$.
 Since interactions occur along each edge of the lattice at rate one and are independently good with probability $\phi \,(\phi + 3)^{-1}$
 $$ X = \poisson (\phi T \,(\phi + 3)^{-1}) \quad \hbox{and} \quad Y = \poisson (3 T \,(\phi + 3)^{-1}). $$
 In particular, for all $\ep > 0$, the probability of the good event 1--4 is
\begin{equation}
\label{eq:block-2}
  \begin{array}{l}
     P \,(G (z, n)) \ \geq \
     1 - 8 \,P \,(X < 2) - 16 \,P \,(Y \neq 0) \vspace*{4pt} \\ \hspace*{100pt} - 8 \,P \,(X = 0) - 8 \,P \,(Y \neq 0) - 16 \times 3 \,P \,(Y \neq 0) \vspace*{4pt} \\ \hspace*{25pt} = \
     1 - 16 \,P \,(X = 0) - 8 \,P \,(X = 1) - 72 \,P \,(Y \neq 0) \vspace*{4pt} \\ \hspace*{25pt} = \
     1 - 8 \,(2 + \phi T \,(\phi + 3)^{-1}) \,\exp (- \phi T \,(\phi + 3)^{-1}) - 72 \,(1 - \exp \,(- 3 T \,(\phi + 3)^{-1})) \vspace*{4pt} \\ \hspace*{25pt} \geq \
     1 - 8 \,(2 + \phi T \,(\phi + 3)^{-1}) \,\exp (- \phi T \,(\phi + 3)^{-1}) - 216 \,T \,(\phi + 3)^{-1} \vspace*{4pt} \\ \hspace*{25pt} = \
     1 - 8 \,(2 + \phi \,\sqrt{\phi} \,(\phi + 3)^{-1}) \,\exp (- \phi \,\sqrt{\phi} \,(\phi + 3)^{-1}) - 216 \,\sqrt{\phi} \,(\phi + 3)^{-1} \ \geq \ 1 - \ep \end{array}
\end{equation}
 for all $\phi$ large enough.
 Finally, we observe that the good event $G (z, n)$ is measurable with respect to the graphical representation in the space-time region
 $$ (z, 2nT) + \{[-2, 3] \times [0, 4T) \} \ \subset \ \Z^2 \times [0, \infty). $$
 This, together with the inclusion \eqref{eq:block-1} and the lower bound \eqref{eq:block-2} are exactly the comparison assumptions
 of Theorem 4.3 in \cite{durrett_1995}, from which the lemma directly follows.
\end{proof} \\ \\
 It is known from standard results based on the so-called contour argument that, for $\ep > 0$ small enough, there exists
 with positive probability an infinite cluster of wet sites in the two dependent oriented site percolation process on $\mathcal H_1$
 starting with one open site at level 0 and in which sites at the other levels are open with probability $1 - \ep$.
 This, together with Lemma \ref{lem:block}, implies that, for the naming game starting with a single bilingual individual,
 $$ \liminf_{t \to \infty} \ P \,(\eta_t (x) = A) \ > \ 0 \quad \hbox{for all} \ x \in \Z^2. $$
 This proves survival of word $A$ but not extinction of word $B$ with positive probability.
 In fact, a weaker form of survival can be proved in the more general case when $\phi > 3$ by simply using techniques similar to the
 ones in the proof of Lemma \ref{lem:supermartingale} to show that the number of individuals using word $A$ is a submartingale.
 However, extinction of word $B$ with positive probability cannot be deduced from this approach.
 In contrast, our coupling with oriented site percolation combined with an idea of the author \cite{lanchier_2012} that extends a result
 of Durrett \cite{durrett_1992} can be used to complete the proof of the theorem.
 This is done in the next lemma.

\begin{lemma} --
\label{lem:dry}
 For all $\phi$ large enough we have $p_A > 0$.
\end{lemma}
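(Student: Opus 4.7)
The plan is to combine the block construction of Lemma~\ref{lem:block} with a contour-type argument in the spirit of Durrett~\cite{durrett_1992} and Lanchier~\cite{lanchier_2012} in order to upgrade survival of word~$A$ into extinction of word~$B$ with positive probability, thereby yielding $p_A > 0$. The first step is to produce an initial $A$-site: starting from a single bilingual individual at the origin, there is a bounded sequence of interactions of positive probability, determined from the transition rules of Table~\ref{tab:coupling}, which converts a fixed $2 \times 2$ neighborhood of the origin into the all-$A$ configuration before time $2T$. On this event, the block at $(0, 0)$ is an $A$-site in the renormalization of Lemma~\ref{lem:block}, so it suffices to argue from the initial condition in which the origin is an $A$-site at level zero.

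The second step is to apply Lemma~\ref{lem:block} to embed the set of $A$-sites into a 2-dependent oriented site percolation on $\mathcal{H}_1$ with parameter $1 - \epsilon$, where $\epsilon$ is taken arbitrarily small by choosing $\phi$ large. The inclusion \eqref{eq:block-1} says that an $A$-site together with its good event propagates $A$-sites to both forward neighbors at the next level, so $A$-sites spread in $\mathcal{H}_1$ like wet sites in supercritical oriented percolation. For $\epsilon$ small enough, the standard Peierls contour argument, applied after the usual thinning step to handle the 2-dependence, implies that with positive probability every site of $\mathcal{H}_1$ in the forward cone of $(0, 0)$ is eventually an $A$-site, i.e., the set of non-$A$-sites at each level is a finite union of finite clusters.

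The third step, and the main obstacle, is to turn this percolation-level statement into actual extinction of word~$B$ in the underlying naming game. Standard survival comparisons yield only $\liminf_{t \to \infty} P(\eta_t (x) = A) > 0$, which is strictly weaker than what is needed, because a renormalized site failing to be an $A$-site does not by itself preclude type $B$ individuals from being present at that space-time block. To close this gap I would invoke the refinement of Durrett~\cite{durrett_1992} due to Lanchier~\cite{lanchier_2012}: finiteness of non-$A$-clusters, together with attractiveness (Lemma~\ref{lem:attractive}) and the fact that the all-$A$ configuration is absorbing for the naming game, forces any $B$ region trapped inside an $A$-envelope to be engulfed and converted to $A$ in finite time. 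The delicate point is to check that the block renormalization preserves enough structure for the contour and dual-cluster estimates of \cite{lanchier_2012} to apply with parameters uniform in the block index, and this is where the choice of $\phi$ sufficiently large, which pushes the percolation parameter $1 - \epsilon$ arbitrarily close to one, is used.
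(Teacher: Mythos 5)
Your first two steps match the paper: a bounded-probability event turns the initial bilingual individual into an $A$-site at $(0,0)$, and Lemma \ref{lem:block} together with the standard contour argument gives percolation of $A$-sites with positive probability for $\phi$ large. The gap is in your third step. You correctly identify the obstacle---a renormalized site failing to be an $A$-site does not preclude the presence of word $B$---but the mechanism you propose to close it, namely that finiteness of the non-$A$-clusters together with ``attractiveness'' and the absorbing nature of the all-$A$ configuration forces any trapped $B$ region to be engulfed in finite time, is not a valid argument and is not what \cite{durrett_1992} or \cite{lanchier_2012} prove. The naming game has no attractiveness of the kind that would make a finite island of non-$A$ individuals surrounded by $A$ individuals shrink: Lemma \ref{lem:attractive} only compares two copies run from ordered initial configurations on the same graphical representation, and a finite $B$ region can perfectly well convert its $A$ neighbors to $AB$. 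No engulfment lemma is available, and none is needed.

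The actual argument runs backward in time. If $\eta_t (x) \neq A$, then, since word $B$ cannot appear spontaneously---an individual in state $A$ only leaves state $A$ by interacting with a neighbor not in state $A$---there is a connected space-time path of non-$A$ states from $(x, t)$ back to time zero. Projected onto the renormalized lattice this path visits only dry (non-$A$) sites, but because it can move sideways across several spatial blocks within a single time level, it is a dry path not in $\mathcal H_1$ but in an augmented graph $\mathcal H_2$ carrying additional horizontal edges $(z, n) \to (z + (\pm 2, \pm 2), n)$. The content of Lemmas 4--11 of \cite{durrett_1992}, extended to $\mathcal H_2$ as in \cite{lanchier_2012}, is precisely that, conditioned on percolation and for $\ep$ small, dry paths started at level zero eventually fail to reach a linearly expanding cone, which combined with the backward-tracing inequality gives $P (\eta_t (x) = A) \to 1$ for all $x$. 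Your proposal never constructs this dual path nor the augmented graph, and without them the percolation-level statement cannot be transferred to the configuration level, so the proof is incomplete at its crucial step.
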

\begin{proof}
 Throughout the proof, we think of the naming game as being coupled with oriented site percolation as in the statement of Lemma \ref{lem:block}.
 To begin with, we follow \cite{lanchier_2012} by introducing the new oriented graph $\mathcal H_2$ with the same vertex set as $\mathcal H_1$
 but in which there is an oriented edge
 $$ \begin{array}{rcl}
     (z, n) \to (z', n') & \hbox{if and only if} & (z' = z + (\pm 1, \pm 1) \ \ \hbox{and} \ \ n' = n + 1) \vspace*{2pt} \\ && \hspace*{15pt}
                                     \hbox{or} \ \ (z' = z + (\pm 2, \pm 2) \ \ \hbox{and} \ \ n' = n). \end{array} $$
 See the right-hand side of Figure \ref{fig:graphs} for a picture in $d = 1$.
\begin{figure}[t]
\centering
\scalebox{0.40}{\input{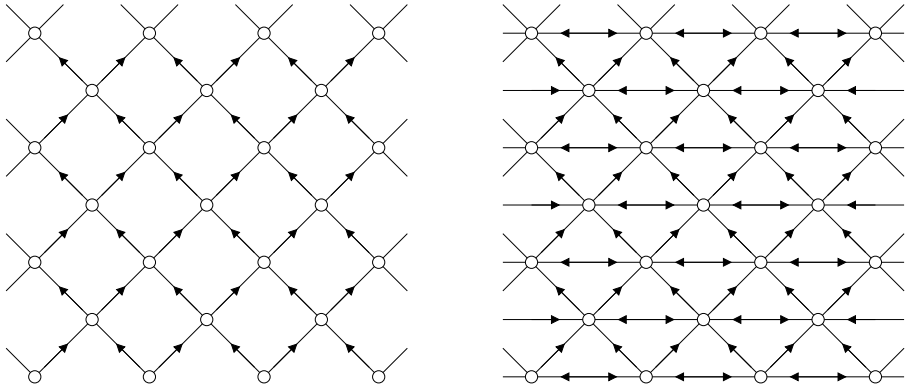}}
\caption{\upshape{Picture of the graphs $\mathcal H_1$ and $\mathcal H_2$ in dimension $d = 1$.}}
\label{fig:graphs}
\end{figure}
 We say that a site in the percolation process is dry if it is not wet.
 Also, for $j = 1, 2$, we write $(w, 0) \to_j (z, n)$ and say that there is a dry path connecting both sites if there is a sequence
 $$ (z_0, 0) = (w, 0), \ (z_1, n_1), \ \ldots, \ (z_k, n_k) = (z, n) \in H $$
 such that the following two conditions hold:
\begin{enumerate}
 \item $(z_i, n_i) \to (z_{i + 1}, n_{i + 1})$ is an oriented edge in $\mathcal H_j$ for all $i = 0, 1, \ldots, k - 1$ and \vspace{4pt}
 \item the site $(z_i, n_i)$ is dry for all $i = 0, 1, \ldots, k$.
\end{enumerate}
 Note that a dry path in $\mathcal H_1$ is also a dry path in $\mathcal H_2$ but the reciprocal is false.
 Now, the proofs of Lemmas 4--11 in Durrett \cite{durrett_1992} imply the following:
 there exists $\ep > 0$ small such that, for the percolation process on $\mathcal H_1$ with parameter $1 - \ep$ starting with $(0, 0)$
 open and all the other sites closed at level zero, conditioned on the event that percolation occurs, we have
\begin{equation}
\label{eq:dry-1}
  \begin{array}{l}
    \displaystyle \lim_{m \to \infty} \ P \,((w, 0) \to_1 (z, n) \ \hbox{for some $w \in 2 \Z^2$,} \vspace*{0pt} \\ \hspace*{100pt}
    \hbox{some $z \in B_2 (0, na)$ and some $n \geq m$}) \ = \ 0 \end{array}
\end{equation}
 for some $a > 0$.
 In words, if the density of open sites is close enough to one, there is a linearly expanding region in which (even closed) sites
 cannot be reached from a path of dry sites starting at level zero.
 This applies to dry paths in the graph $\mathcal H_1$ but as pointed out in \cite{lanchier_2012}, the proofs of Lemmas 4--11
 in Durrett \cite{durrett_1992} easily extend to give \eqref{eq:dry-1} for dry paths in $\mathcal H_2$.
 To conclude the proof, the last step is to show the connection between dry paths and $A$-sites.
 Assume that
\begin{equation}
\label{eq:event-1}
  \eta_t (x) \neq A \quad \hbox{for some} \quad x \in \Z^2 \ \ \hbox{and} \ \ t \in [2nT, 2 (n + 1) \,T).
\end{equation}
 Since word $B$ cannot appear spontaneously, this implies the existence of
 $$ \begin{array}{l}
     x_0, x_1, \ldots, x_m = x \in \Z^2 \quad \hbox{and} \quad s_0 = 0 < s_1 < \cdots < s_{m + 1} = t \vspace*{4pt} \\ \hspace*{50pt}
    \hbox{such that} \quad \eta_s (x_j) \neq A \ \ \hbox{for all} \ \ s_j \leq s \leq s_{j + 1} \ \hbox{and} \ j = 0, 1, \ldots, m, \end{array} $$
 which in turn implies that
\begin{equation}
\label{eq:event-2}
  (w, 0) \to_2 (z, n) \ \ \hbox{for some} \ \ w \in 2 \Z^2 \ \ \hbox{and} \ \ (z, n) \ \hbox{such that} \ (x, t) \in B (z, n).
\end{equation}
 Note however that this does not imply the existence of a dry path in $\mathcal H_1$ which is the reason why we introduced a new
 graph with additional edges.
 Taking the probability of the event in \eqref{eq:event-2} and the probability of the sub-event in \eqref{eq:event-1} directly gives
\begin{equation}
\label{eq:dry-2}
 P \,(\eta_t (x) \neq A) \ \leq \ P \,((w, 0) \to_2 (z, n) \ \hbox{for some} \ w \in 2 \Z^2)
\end{equation}
 where $(z, n)$ is the unique site such that $(x, t) \in B (z, n)$.
 Since $\ep > 0$ can be made arbitrarily small by choosing $\phi$ large, the analog of \eqref{eq:dry-1} for oriented dry paths in the
 graph $\mathcal H_2$ together with the inequality \eqref{eq:dry-2} implies that, conditioned on the event that percolation occurs,
 $$ \lim_{t \to \infty} \ P \,(\eta_t (x) = A) \ = \ 1 \quad \hbox{for all} \ x \in \Z^2 $$
 for the naming game conditioned on the event that $(0, 0)$ is an $A$-site.
 Since the probability that percolation occurs is positive for $\ep > 0$ small and since there is a positive probability for the
 process starting with a single bilingual individual at the origin that all sites in the spatial box $\{0, 1 \}^d$ are of type $A$
 at time one, the lemma and Theorem \ref{th:lattice} follow.
\end{proof}

%%%%%%%%%%%%%%%%%%%%%%%%%%%%%%%%%%%%%%%%%%%%%%%%%%%%%%%%%%%%%%%%%%%%%%%%%%%%%%%%%%%%%%%%%%%%%%%%%%%%%%%%%%%%%%%%%%%%%%%%%%%%%%%%%%%%%%%%%%

\end{document}